\documentclass[12pt]{aptpub}

\usepackage{amsmath,amstext,url}

\oddsidemargin=-0.8cm \evensidemargin=-0.8cm \textwidth=16.4cm
\textheight=22.50cm \topmargin=-1.0cm

\renewcommand{\shorttitle}

\numberwithin{equation}{section} 

\makeatletter \@addtoreset{equation}{section}

\makeatletter \@addtoreset{lemma}{section}

\makeatletter \@addtoreset{theorem}{section}

\makeatletter \@addtoreset{corollary}{section}

\makeatletter \@addtoreset{remark}{section}

\makeatletter \@addtoreset{definition}{section}

\makeatletter \@addtoreset{example}{section}



\begin{document}

\thispagestyle{firstpg}
 \vspace*{1.5pc} \begin{center}
\normalsize{\bf{The $M^X/M/c$ queue with state-dependent control at
idle time and catastrophes}}\end{center} \hfill

\centerline{\scshape  $^1$Junping Li $\quad$    $^{2,1}$Lina Zhang}
\medskip
{\footnotesize
 \centerline{$^1$School of Mathematics and Statistics,}
   \centerline{Central South University, Changsha $410083$, Hunan Province, P.R. China}
 \centerline{$^2$School of Mathematics and Computational Science,}
\centerline{Xiangtan University,   Xiangtan 411105, Hunan Province,
P.R. China} }

\par
\footnote{\hspace*{-0.75pc}Email address: jpli@mail.csu.edu.cn, \
zln514@163.com}
\par
\renewenvironment{abstract}{%
\vspace{12pt} \vspace{1.5pc} \hspace*{2.25pc}
\begin{minipage}{14cm}
\footnotesize {\bf Abstract.} } {\end{minipage}}
\begin{abstract}
In this paper, we consider an $M^X/M/c$ queue with state-dependent
control at idle time and catastrophes. Properties of the queues
which terminate when the servers become idle are firstly studied.
Recurrence, equilibrium distribution and equilibrium queue-size
structure are studied for the case of resurrection and no
catastrophes. All of these results and the first effective
catastrophe occurrence time are then investigated for the case of
resurrection and catastrophes. In particular, we can obtain the
Laplace transform of the transition probability for the absorptive
$M^X/M/c$ queue.
\end{abstract}

\vspace*{12pt} \hspace*{2.25pc}
\parbox[b]{33.05pc}{{
{\footnotesize {\bf Keywords.} Markovian bulk-arriving queues,
equilibrium distribution, recurrence, queue size, effective
catastrophe}}}
\par
\normalsize

\renewcommand{\amsprimary}[1]{
     \vspace*{8pt}
     \hspace*{2.25pc}
     \parbox[b]{24.75pc}{\scriptsize
    AMS 2000 Subject Classification: Primary 60J35 Secondary
    60J27; 60K25
     {\uppercase{#1}}}\par\normalsize}
\renewcommand{\ams}[2]{
     \vspace*{8pt}
     \hspace*{2.25pc}
     \parbox[b]{24.75pc}{\scriptsize
     AMS 2000 SUBJECT CLASSIFICATION: PRIMARY
     {\uppercase{#1}}\\ \phantom{
     AMS 2000
     SUBJECT CLASSIFICATION:
     }
    SECONDARY
 {\uppercase{#2}}}\par\normalsize}

\ams{60J35}{60J27, 60K25}

\par
\vspace{5mm}
 \setcounter{section}{1}
 \setcounter{equation}{0}
 \setcounter{theorem}{0}
 \setcounter{lemma}{0}
 \setcounter{corollary}{0}

\noindent {\large \bf 1. Introduction} \vspace{3mm}
\par
Markovian queue theory is a basic and important branch of queueing
theory. It interweaves general theory of queueing models and general
theory and applications of continuous time Markov chains and has
become a very successful and fruitful research field. Many research
works can be referenced, for example, Gross and
Harris~\cite{Gr1985}, Asmussen~\cite{As2003} for the former and
Anderson~\cite{An1991} for the latter. See also Chen~\cite{CMF92}
and~\cite{CMF04}, which contain many new materials for continuous
time Markov chains.
\par
 Markovian queueing models with state-independent and state-dependent controls have also attracted
considerable research interests. A most recent work on this
direction could be seen in Chen \textit{et al} \cite{CPLZ2010a}. In
these models, arbitrary inputs are allowed when the queue is empty.
Usually, this is due to the consideration of improving working
efficiency. Gelenbe \cite{Ge1991} and Gelenbe \textit{et al}
\cite{Geetal1991} introduced the particularly interesting concept of
negative arrivals, whilst other related papers include Jain and
Sigman \cite{Jain1996} and Bayer and Boxma \cite{Bayer1996}.
Parthasarathy and Krishna Kumar \cite{Pa1991} allowed arbitrary
input when the queue is empty, and Chen and Renshaw
\cite{Ch1997,Ch2004} introduced the possibility of removing the
entire workload. Di Crescenzo \textit{et al} \cite{DGNR2008}
considered the first effective catastrophe occurrence time of a
birth-death process. Dudin and Karolik \cite{DK2001} investigated a
BMAP/SM/1 system which is exposed to disasters arrivals. The
$M_t/M_t/N$ queue with catastrophes can be seen in Zeifman and
Korotysheva \cite{ZK2012}. From practical point of view, models with
disasters are quite interesting, e.g., the work of hardware
influenced by breaks and occasional power disappearance, the work of
communication systems influenced by computer viruses or intentional
external interventions, deletion of transactions in databases, the
operation of air defense radars, etc. More detailed information
about the real and potential applications and corresponding
descriptions can be found in \cite{AR2000}.
\par
It is worth noting that the papers \cite{CPLZ2010a, Ch1997, Ch2004}
discussed the queues with single server in the system. The present
paper considers $c$-servers in the system, it is a natural
generalization of the models considered in \cite{CPLZ2010a, Ch1997,
Ch2004}. Since there are more than one servers in the system, the
method in \cite{CPLZ2010a, Ch1997, Ch2004} fails and we have to find
some other techniques and methods to treat it. Moreover, in order to
consider the recurrence properties, equilibrium distribution,
equilibrium queue-size structure and the first effective catastrophe
occurrence time of the modified $M^X/M/c$ queue, we have to first
show all the Laplace transforms of the transition probability for
the absorptive $M^X/M/c$ queue, as usual, this is very difficult to
get the transition probability or transition function for a general
Markov process.
\par
We shall define our model by specifying the infinitesimal
characteristic, i.e., the so called $q$-matrix. Throughout this
paper, let ${\bf{E}}=\{0,1,2,\cdots\}$ denote the set of nonnegative
integers.
\begin{definition}\label{Definition 1.1}
A conservative $q$-matrix $Q=(q_{ij};\ i,j\in {\bf{E}})$ is called
the $M^X/M/c$ queueing $q$-matrix with state-dependent control at
idle time and catastrophes (henceforth referred to as the modified
$M^X/M/c$ $q$-matrix), if
\begin{equation}\label{bbb 1.1}
Q=Q^*+Q_s+Q_d,
\end{equation}
where $Q^*=(q_{ij}^*;\ i,j\in {\bf{E}})$,  $Q_s=(q_{ij}^{(s)};\
i,j\in {\bf{E}})$ and $Q_d=(q_{ij}^{(d)};\
 i,j\in {\bf{E}})$ are all conservative $q$-matrices which are given as
follows
\begin{equation}\label{bbb 1.2}
q_{ij}^*
   =\begin{cases}
     \min(i,c)b_{0}, &  \mbox{if $i\geq 1,\ j= i-1$},\\
     b_{1}-[\min(i,c)-1]b_0, & \mbox{if $i\geq 1,\ j= i$},\\
     b_{j-i+1}, & \mbox{if $i\geq 1,\ j\geq i+1$},\\
     0,              & \mbox{otherwise},
    \end{cases}
\end{equation}
 \begin{equation}\label{qsdefinition}
q_{ij}^{(s)}
   =\begin{cases}
    -h, & \mbox{if $i=0,\ j=0$},\\
     h_j, &  \mbox{if $i=0,\ j\geq 1$},\\
     0,              & \mbox{otherwise},
    \end{cases}
\end{equation}
\begin{equation}\label{bbb 1.3}
q_{ij}^{(d)}
   =\begin{cases}
     \beta, & \mbox{if $i\geq 1,\ j= 0$},\\
     -\beta, & \mbox{if $i\geq 1,\ j=i$},\\
     0,              & \mbox{otherwise},
    \end{cases}
\end{equation}
respectively. Here $\beta\geq0,\ h_j\geq0\ (j\geq1)$ and $ b_j\geq0\
(j\neq1)$ with
\begin{equation*}
0\leq h:=\sum_{j=1}^\infty h_j<\infty\ \ and\ \
0<-b_1=\sum_{j\neq1}b_j<\infty.
\end{equation*}
\end{definition}
\par
Since the modified $M^X/M/c$ $q$-matrix $Q$ is conservative and
bounded, by the theory of Markov chains(see \cite{An1991}), there
exists a unique $Q$-process, i.e., Feller minimal $Q$-process, which
satisfies the Kolmogorov forward and backward equations. We call
this process  the modified $M^X/M/c$ queueing process denoted by
$\{X_t;\ t\geq0\}$. Furthermore, denote
\begin{equation}\label{tidleQ}
\tilde{Q}=Q^*+Q_s,
\end{equation}
and let $\{\tilde{X}_t;\ t\geq0\}$ be the corresponding
$\tilde{Q}$-process. In order to avoid discussing some trivial
cases, throughout this paper we assume that $b_0>0$ and
$\sum_{j=2}^{\infty}b_j>0$.
\par
This process includes several interesting models as special cases.
For example, if $c=1$, we recover the model considered in Chen and
Renshaw \cite{Ch2004}. Whilst if $c=1$ and
 $\beta=0\ $(i.e., no catastrophe) and $h_j\equiv b_{j+1}\ (j\geq1),$ we obtain the ordinary $M^X/M/1$ queue.
 If $c=1$, $\beta=0$, $h_1=b_2$ and $h_j=b_{j+1}=0\ (j\geq2)$, we obtain the simple $M/M/1$ queue. Whilst taking
  $c=1$ and $b_j=0\ (j\geq3)$
 generates the model considered in Chen and Renshaw
  \cite{Ch1997}. Furthermore, if $\beta=0$, $h_1=b_2$ and $h_j=b_{j+1}=0\ (j\geq2)$, we obtain the simple $M/M/c$ queue.
\par
The structure of this paper is organized as follows. We first
introduce some key lemmas and consider the properties of the stopped
$M^X/M/c$ queue in Section 2, since our later discussion depends
heavily upon them. The $M^X/M/c$ queue, modified to ensure that
arbitrary mass arrivals may occur  when the system is empty, is
fully discussed in Section 3. Whilst the $M^X/M/c$ queue with
resurrection and catastrophes is studied in Section 4.
\\\\
\setcounter{section}{2}
 \setcounter{equation}{0}
 \setcounter{theorem}{0}
 \setcounter{lemma}{0}
 \setcounter{corollary}{0}
\noindent {\large \bf 2. The stopped $M^X/M/c$ queue} \vspace{3mm}
\par
We first consider the $M^X/M/c$ queue, modified by terminating the
process when it first becomes empty. The associated $q$-matrix $Q^*$
is given by \eqref{bbb 1.2}. Define the generating functions
\begin{equation}\label{bbb 2.1}
B(s)=\sum_{j=0}^\infty b_js^j\ \mbox{and}\
B_i(s)=B(s)+(i-1)b_0(1-s),\ \ i=1,2,\cdots,c.
\end{equation}
\par
These are all well defined for $s\in[-1,1],$ whilst as power-series
they are $C^\infty$ functions with respect to $s$ on $(-1,1).$ Also,
$B(s)$ and $B_i(s)$ are convex functions on $[0,1].$ The convex
property of $B_c(s)$ immediately yields the following simple, yet
important, result.
\begin{lemma}\label{ablele 2.1}
The equation $B_c(s)=0$ has a smallest root $u$ on $[0,1]$ with
$u=1$ if $B_c^\prime(1)\leq 0$ and $u<1$ if $B_c^\prime(1)>0$. More
specifically,
\par
\emph{(i)} if $B_c^\prime(1)\leq 0$, then $B_c(s)>0$ for all
$s\in[0,1),$ whence $B_c(s)=0$ has exactly one root $s=1$ in
$[0,1]$\emph{;} and,
\par
\emph{(ii)} if $B_c^\prime(1)>0$, then $B_c(s)=0$ has exactly two
roots, $u$ and $1$, in $[0,1]$ such that $B_c(s)>0$ when $0\leq s<u$
and $B_c(s)<0$ when $u<s<1.$
\end{lemma}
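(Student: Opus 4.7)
The plan is to exploit the convexity of $B_c$ together with its boundary values at $s=0$ and $s=1$. First I would record three basic facts. Since the defining relation $-b_1=\sum_{j\neq 1}b_j$ rewrites as $\sum_{j\geq 0}b_j=0$, we have $B(1)=0$, whence $B_c(1)=B(1)+(c-1)b_0(1-1)=0$; so $s=1$ is always a root. Next, $B_c(0)=b_0+(c-1)b_0=cb_0>0$ by the standing assumption $b_0>0$. Finally, the hypothesis $\sum_{j\geq 2}b_j>0$ guarantees that at least one coefficient $b_j$ with $j\geq 2$ is strictly positive, so the power-series formula $B''(s)=\sum_{j\geq 2}j(j-1)b_js^{j-2}$ is strictly positive on $(0,1)$; since the linear term $(c-1)b_0(1-s)$ contributes nothing to the second derivative, $B_c$ is \emph{strictly} convex on $(0,1)$, and in particular $B_c'$ is strictly increasing there.

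For part (i), assume $B_c'(1)\leq 0$. Strict monotonicity of $B_c'$ yields $B_c'(s)<B_c'(1)\leq 0$ for $s\in(0,1)$, so $B_c$ is strictly decreasing on $[0,1]$. Combined with $B_c(1)=0$, this gives $B_c(s)>0$ for every $s\in[0,1)$, so $s=1$ is the unique root of $B_c$ in $[0,1]$.

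For part (ii), assume $B_c'(1)>0$. A first-order expansion at $s=1$ forces $B_c(s)<0$ for $s$ slightly below $1$. Combined with $B_c(0)=cb_0>0$ and continuity of $B_c$, the intermediate value theorem produces a root in $(0,1)$. I would then set $u:=\inf\{s\in[0,1]:B_c(s)=0\}$; continuity gives $B_c(u)=0$, minimality gives $B_c(s)>0$ on $[0,u)$, and the argument above forces $u\in(0,1)$. For $s\in(u,1)$, writing $s=(1-\lambda)u+\lambda\cdot 1$ with $\lambda\in(0,1)$ and using strict convexity together with $B_c(u)=B_c(1)=0$, I obtain $B_c(s)<(1-\lambda)B_c(u)+\lambda B_c(1)=0$, which simultaneously establishes the claimed sign pattern on $(u,1)$ and shows that $u$ and $1$ are the only zeros in $[0,1]$.

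The only delicate point is upgrading the weak convexity inequalities into strict ones, which is precisely where the standing assumption $\sum_{j\geq 2}b_j>0$ is invoked; without it $B$ could degenerate to an affine function and neither the monotonicity argument in case (i) nor the uniqueness argument in case (ii) would go through cleanly.
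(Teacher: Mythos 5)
Your proof is correct and follows exactly the route the paper intends: the paper states Lemma~2.1 without proof, remarking only that it follows immediately from the convexity of $B_c(s)$, and your argument is precisely that convexity proof spelled out (boundary values $B_c(0)=cb_0>0$, $B_c(1)=0$, strict convexity from $\sum_{j\geq 2}b_j>0$, then the sign analysis via $B_c^\prime(1)$). No discrepancies to report.
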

\par
We now define, for any $\lambda>0,$
\begin{equation}\label{bbb 2.4}
U_\lambda(s):=B_c(s)-\lambda s.
\end{equation}
It is clear that $U_\lambda(\cdot)$ is also $C^\infty$ with respect
to $s$ on $(-1,1)$ and that it is convex on $[0,1].$ The following
result can be easily proved.
\begin{lemma}\label{abclele 2.2}
For any fixed $\lambda>0,$ the equation $U_\lambda(s)=0$ has exactly
one root $u(\lambda)$ on $[0,1],$ and $0<u(\lambda)<1.$
\end{lemma}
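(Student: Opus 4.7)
The plan is to argue existence by the intermediate value theorem applied to the continuous function $U_\lambda$, and uniqueness by the convexity of $U_\lambda$ on $[0,1]$ established just before the lemma.

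First I would evaluate $U_\lambda$ at the two endpoints of $[0,1]$. At $s=0$, since $B(0)=b_0$ and $B_c(s)=B(s)+(c-1)b_0(1-s)$, we get
\begin{equation*}
U_\lambda(0)=B_c(0)=b_0+(c-1)b_0=cb_0>0,
\end{equation*}
using the standing assumption $b_0>0$. At $s=1$, I would use the conservativeness condition $-b_1=\sum_{j\neq 1}b_j$ built into Definition~\ref{Definition 1.1}, which gives $B(1)=\sum_{j=0}^\infty b_j=0$ and hence $B_c(1)=0$. Therefore
\begin{equation*}
U_\lambda(1)=B_c(1)-\lambda=-\lambda<0.
\end{equation*}
Since $U_\lambda$ is continuous on $[0,1]$, the intermediate value theorem yields at least one root $u(\lambda)\in(0,1)$.

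For uniqueness, I would invoke the convexity of $U_\lambda$ on $[0,1]$ already noted in the text (it is the sum of the convex $B_c$ and the linear map $s\mapsto -\lambda s$). A convex function on an interval crosses the value $0$ at most twice; if it had two distinct roots $u_1<u_2$ in $[0,1]$, then convexity would force $U_\lambda\le 0$ on $[u_1,u_2]$ and $U_\lambda\ge 0$ outside this interval within $[0,1]$. But $U_\lambda(1)=-\lambda<0$ forces $1\in[u_1,u_2]$, hence $u_2\ge 1$, so $u_2=1$; this contradicts $U_\lambda(1)\neq 0$. Consequently the root $u(\lambda)$ is unique in $[0,1]$, and the strict inequalities $0<u(\lambda)<1$ follow from the strict sign information at the endpoints.

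There is no serious obstacle here: the only mild subtlety is remembering that the identity $B(1)=0$ (equivalently $B_c(1)=0$) comes from the conservativeness of $Q^*$, which is what makes $s=1$ a sure sign-change location once $\lambda>0$ is subtracted. Everything else is a routine application of IVT plus the convexity argument already flagged by the authors.
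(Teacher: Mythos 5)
Your proof is correct: the endpoint evaluations $U_\lambda(0)=cb_0>0$ and $U_\lambda(1)=-\lambda<0$ (the latter using conservativeness, $B_c(1)=0$) give existence via the intermediate value theorem, and your convexity argument rules out a second root in $[0,1]$. The paper omits the proof entirely, stating only that the result "can be easily proved," and your IVT-plus-convexity argument is exactly the routine argument the authors evidently have in mind, so there is nothing to add.
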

\par
Similar to the proof of Lemma 2.3 in \cite{Ch2004}, we can obtain
the following Lemma.
\begin{lemma}\label{abclele 2.3}
For $u(\cdot)$ as defined in Lemma~\emph{\ref{abclele 2.2}}.
\emph{(i)} $u(\lambda)\in C^\infty(0,\infty)$; \emph{(ii)}
$u(\lambda)$ is a decreasing function of $\lambda>0$; \emph{(iii)}
$u(\lambda)\downarrow0$ and $\lambda u(\lambda)\rightarrow cb_0$ as
$\lambda\rightarrow\infty$; \emph{(iv)} when $\lambda\rightarrow
0^+$,
\begin{equation}\label{bbb 2.5}
u(\lambda)\uparrow u
\begin{cases}
=1& \mbox{if $B_c^\prime(1)\leq 0$},\\
<1& \mbox{if $B_c^\prime(1)>0$},
\end{cases}
\end{equation}
 where $u$ is the smallest root of $B_c(s)=0$ on
 $[0,1]$;
\par
\emph{(v)} for any positive integer $k$,
\begin{equation}\label{bbb 2.6}
\lim_{\lambda\rightarrow0^+}\frac{1-u(\lambda)^k}{\lambda}=
\begin{cases}
\infty& \mbox{if $B_c^\prime(1)\geq0$},\\
\frac{k}{-B_c^\prime(1)}& \mbox{if $B_c^\prime(1)<0$}.
\end{cases}
\end{equation}
\end{lemma}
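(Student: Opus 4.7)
I plan to treat the five parts in order, exploiting the defining identity $B_c(u(\lambda))=\lambda u(\lambda)$ throughout.

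For (i), I would apply the implicit function theorem to $F(\lambda,s):=B_c(s)-\lambda s$. The only nontrivial check is $\partial_s F(\lambda,u(\lambda))\neq 0$. Since $U_\lambda=F(\lambda,\cdot)$ is convex on $[0,1]$ with $U_\lambda(0)=cb_0>0$ and $U_\lambda(1)=-\lambda<0$, a tangent-type zero at $u(\lambda)$ would force $U_\lambda\geq 0$ on $[0,1]$ and contradict $U_\lambda(1)<0$; hence $U_\lambda'(u(\lambda))<0$, and since $B_c$ is $C^\infty$ on $(-1,1)$ we conclude $u\in C^\infty(0,\infty)$. Part (ii) is then routine: implicit differentiation of $B_c(u(\lambda))=\lambda u(\lambda)$ gives $(B_c'(u(\lambda))-\lambda)\,u'(\lambda)=u(\lambda)>0$, and the bracket is negative by the slope computation just performed, so $u'(\lambda)<0$.

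For (iii) and (iv) I would pass to the limit in the defining identity, using the monotonicity from (ii). For (iii), $u(\lambda)$ descends to some $u_\infty\in[0,1)$; since $\lambda u(\lambda)=B_c(u(\lambda))$ remains bounded by $\max_{[0,1]} B_c$, we must have $u_\infty=0$, whence $\lambda u(\lambda)\to B_c(0)=cb_0$. For (iv), $u(\lambda)$ ascends to some $u^*\in(0,1]$ with $B_c(u^*)=0$, and Lemma~\ref{ablele 2.1} identifies it. If $B_c'(1)\leq 0$ the only root is $1$, so $u^*=1$; if $B_c'(1)>0$, then $B_c(u(\lambda))=\lambda u(\lambda)>0$ together with Lemma~\ref{ablele 2.1}(ii) forces $u(\lambda)<u$ for every $\lambda>0$, so $u^*\leq u$ and hence $u^*=u<1$.

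Part (v) is the main step. I would factor
\[
\frac{1-u(\lambda)^k}{\lambda}=\Bigl(\sum_{j=0}^{k-1}u(\lambda)^j\Bigr)\cdot\frac{1-u(\lambda)}{\lambda}.
\]
If $B_c'(1)>0$, part (iv) gives $u(\lambda)\to u<1$, so $1-u(\lambda)$ stays bounded below and the second factor blows up like $1/\lambda$, producing the $+\infty$ case. If $B_c'(1)\leq 0$ then $u(\lambda)\to 1$ and the first factor tends to $k$, so everything hinges on the asymptotics of $1-u(\lambda)$, which I would extract from a Taylor expansion of $B_c$ at $1$ substituted into $B_c(u(\lambda))=\lambda u(\lambda)$. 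When $B_c'(1)<0$ the first-order term dominates and yields $1-u(\lambda)\sim \lambda/(-B_c'(1))$, hence the stated value $k/(-B_c'(1))$. The delicate subcase, and the main obstacle of the argument, is $B_c'(1)=0$: the first-order term vanishes, so I must use the second-order expansion, which is legitimate because the standing assumption $\sum_{j\geq 2}b_j>0$ implies $B_c''(1)>0$. This gives $1-u(\lambda)\sim\sqrt{2\lambda/B_c''(1)}$ and consequently $(1-u(\lambda))/\lambda\to +\infty$, completing the dichotomy.
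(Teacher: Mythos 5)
Your treatment of (i)--(iv) is correct, and it is essentially the standard argument: the paper itself offers no proof of this lemma, deferring to Lemma 2.3 of Chen and Renshaw \cite{Ch2004}, whose single-server proof runs along exactly these lines, namely exploiting the identity $B_c(u(\lambda))=\lambda u(\lambda)$, the convexity of $U_\lambda$ in \eqref{bbb 2.4}, and Lemma~\ref{ablele 2.1}. In particular your key observation that tangency is impossible, so that $U_\lambda^\prime(u(\lambda))=B_c^\prime(u(\lambda))-\lambda<0$, correctly delivers both the smoothness via the implicit function theorem and the strict decrease of $u(\lambda)$, and the limit arguments for (iii) and (iv) (boundedness of $B_c$ on $[0,1]$, identification of the limit as a root of $B_c$, and exclusion of the root $1$ when $B_c^\prime(1)>0$ via $B_c(u(\lambda))>0$) are all sound.

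The one step that does not hold as written is the subcase $B_c^\prime(1)=0$ of (v). The standing hypotheses only require $b_j\geq0$ $(j\neq1)$, $\sum_{j\neq1}b_j<\infty$, $b_0>0$ and $\sum_{j\geq2}b_j>0$; no second-moment condition is imposed, so $B_c^{\prime\prime}(1)=\sum_{j\geq2}j(j-1)b_j$ may well be $+\infty$ even when $B_c^\prime(1)=0$ (e.g.\ $b_j$ of order $j^{-7/2}$ with $b_0,b_1$ adjusted). Then a second-order Taylor expansion at $s=1$ with finite coefficient is unavailable, and the claimed equivalence $1-u(\lambda)\sim\sqrt{2\lambda/B_c^{\prime\prime}(1)}$ is not meaningful; knowing only $B_c^{\prime\prime}(1)>0$ does not legitimize it. Fortunately the conclusion needs no second-order information: since $B_c^\prime$ is nondecreasing and $B_c^\prime(1)=0$, one has $B_c(s)=\int_s^1\bigl(-B_c^\prime(t)\bigr)\,dt=o(1-s)$ as $s\uparrow1$, so writing $\frac{1-u(\lambda)}{\lambda}=\frac{(1-u(\lambda))\,u(\lambda)}{B_c(u(\lambda))}$ and using $u(\lambda)\uparrow1$ from (iv) gives $\frac{1-u(\lambda)}{\lambda}\rightarrow+\infty$, which is what \eqref{bbb 2.6} asserts in this case (the same identity also reproduces your $B_c^\prime(1)<0$ computation, since there $B_c(u(\lambda))/(1-u(\lambda))\rightarrow-B_c^\prime(1)$). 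With this repair the proof of (v), and hence of the lemma, is complete.
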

\par
Let $(p_{ij}^*(t);\ i,j\geq0)$ and $(\phi_{ij}^*(\lambda);\
i,j\geq0)$ be the $Q^*$-function and $Q^*$-resolvent, respectively.
\par
\begin{theorem}\label{the resolvent of transition probability}
For any $i\geq0$, $(\phi_{ij}^*(\lambda);\ 0\leq j\leq c-1)$ is the
unique solution of the following linear equations
\begin{equation}\label{lamdaphii0starlamda}
\begin{cases}
-\lambda
\phi_{i0}^*(\lambda)-\sum_{k=1}^{c-1}u(\lambda)^{k-1}[B_c(u(\lambda))-B_k(u(\lambda))]\phi_{ik}^*(\lambda)
=-u(\lambda)^i,\\
-\lambda
\phi_{i0}^*(\lambda)+b_0\phi_{i1}^*(\lambda)=-\delta_{i0},\\
(b_1-\lambda)\phi_{i1}^*(\lambda)+2b_0\phi_{i2}^*(\lambda)=-\delta_{i1},\\
\ \ \ \ \ \ \cdots\\
\sum_{k=1}^{j-1}\phi_{ik}^*(\lambda)b_{j-k+1}+[b_1-(j-1)b_0-\lambda]\phi_{ij}^*(\lambda)
+(j+1)b_0\phi_{ij+1}^*(\lambda)=-\delta_{ij},\\
\ \ \ \ \ \ \cdots\\
\sum_{k=1}^{c-3}\phi_{ik}^*(\lambda)b_{c-k-1}+[b_1-(c-3)b_0-\lambda]\phi_{ic-2}^*(\lambda)
+(c-1)b_0\phi_{ic-1}^*(\lambda)=-\delta_{ic-2},
\end{cases}
\end{equation}
where $u(\lambda)(\lambda>0)$ is the unique root of $U_\lambda(s)=0$
on $[0,1]$.
\end{theorem}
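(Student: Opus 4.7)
The plan is to derive the system from the Kolmogorov forward equations for the $Q^*$-resolvent, completed by a generating function identity evaluated at the root $s=u(\lambda)$ of $U_\lambda$.

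I will first obtain equations two through $c$. State $0$ is absorbing for $Q^*$, so the Kolmogorov forward equation $\lambda\phi^*_{ij}(\lambda)-\delta_{ij}=\sum_{k\geq 1}\phi^*_{ik}(\lambda)q^*_{kj}$ for $0\le j\le c-2$ involves only $q^*_{kj}$ with $k\le j+1\le c-1$, where $\min(k,c)=k$; substituting \eqref{bbb 1.2} reproduces the $j$-th equation of the system term by term.

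For the first equation, I would introduce the generating function
\[
\Phi_i(s,\lambda):=\sum_{j=0}^\infty \phi^*_{ij}(\lambda)\,s^j,
\]
which converges absolutely on $|s|\le 1$ since $0\le\phi^*_{ij}(\lambda)\le\lambda^{-1}$. A direct expansion gives, for each $k\ge 1$,
\[
\sum_{j=0}^\infty q^*_{kj}\,s^j=s^{k-1}\bigl[B(s)+(\min(k,c)-1)b_0(1-s)\bigr]=s^{k-1}B_{\min(k,c)}(s).
\]
Multiplying the forward equations by $s^j$, summing over $j\ge 0$, and splitting the $k$-sum at $c$ (using $B_{\min(k,c)}=B_c$ for $k\ge c$) then produces
\[
U_\lambda(s)\Phi_i(s,\lambda)=-s^{i+1}+B_c(s)\phi^*_{i0}(\lambda)+(1-s)b_0\sum_{k=1}^{c-1}(c-k)\,s^k\,\phi^*_{ik}(\lambda).
\]
By Lemma~\ref{abclele 2.2}, $U_\lambda(u(\lambda))=0$ with $u(\lambda)\in(0,1)$, so the right-hand side must vanish at $s=u(\lambda)$. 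Substituting $B_c(u(\lambda))=\lambda u(\lambda)$ together with the identity $B_c(s)-B_k(s)=(c-k)b_0(1-s)$ (immediate from \eqref{bbb 2.1}) and dividing by $u(\lambda)>0$ reproduces the first equation of the system exactly.

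For uniqueness, equations two through $c$ permit me to express each of $\phi^*_{i1}(\lambda),\dots,\phi^*_{i,c-1}(\lambda)$ as an affine function of $\phi^*_{i0}(\lambda)$ by forward substitution, since in the $j$-th forward equation the coefficient of the ``next'' unknown $\phi^*_{i,j+1}(\lambda)$ equals $(j+1)b_0>0$. Plugging these expressions into the first equation reduces the entire system to a single scalar linear equation in $\phi^*_{i0}(\lambda)$. The main technical obstacle is verifying that the scalar coefficient of $\phi^*_{i0}(\lambda)$ in this reduced equation does not vanish for any $\lambda>0$; this can be checked directly for small $c$ (e.g.\ for $c=2$ the coefficient equals $-\lambda(2-u(\lambda))\ne 0$), and I expect the general argument to exploit $u(\lambda)\in(0,1)$ together with the sign structure of the recursively defined coefficients of $\phi^*_{i0}(\lambda)$ appearing in $\phi^*_{ik}(\lambda)$.
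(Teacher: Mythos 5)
Your derivation of the system is essentially the paper's own proof. The paper writes down the Kolmogorov forward equations \eqref{phiijlamda}, sums them against $s^j$ to get $\lambda\sum_{j=0}^\infty\phi_{ij}^*(\lambda)s^j-s^i=\sum_{k=1}^{c-1}s^{k-1}B_k(s)\phi_{ik}^*(\lambda)+B_c(s)\sum_{j\geq c}\phi_{ij}^*(\lambda)s^{j-1}$, and evaluates at $s=u(\lambda)$ using $\lambda u(\lambda)=B_c(u(\lambda))$ to obtain the first equation of \eqref{lamdaphii0starlamda}; your identity $U_\lambda(s)\Phi_i(s,\lambda)=-s^{i+1}+B_c(s)\phi_{i0}^*(\lambda)+(1-s)b_0\sum_{k=1}^{c-1}(c-k)s^k\phi_{ik}^*(\lambda)$ is the same statement after clearing denominators (compare \eqref{bbb 2.13}, via $B_c(s)-B_k(s)=(c-k)b_0(1-s)$), and your reading of equations two through $c$ as the forward equations for columns $0,\dots,c-2$ (using that state $0$ is absorbing and $q^*_{kj}=0$ for $k>j+1$) is exactly what the paper means by ``the first $c-1$ equations of \eqref{phiijlamda}''.

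The one point you leave open, nonvanishing of the coefficient of $\phi_{i0}^*(\lambda)$ after forward substitution, is a real issue for the word ``unique'' in the statement, but be aware that the paper does not address it either: its proof simply asserts that combining \eqref{the condition of ulamda} with the first $c-1$ forward equations yields the unique solution. Your reduction is the right way to close it, and the general claim does hold. Writing $\phi_{ik}^*(\lambda)=\gamma_k\,\phi_{i0}^*(\lambda)+\mathrm{const}_k$, the homogeneous coefficients satisfy $\gamma_0=1$, $b_0\gamma_1=\lambda$, and, since $-b_1=b_0+\sum_{m\geq2}b_m$,
\begin{equation*}
(j+1)b_0\gamma_{j+1}=\Big[\lambda+jb_0+\sum_{m\geq2}b_m\Big]\gamma_j-\sum_{k=1}^{j-1}b_{j-k+1}\gamma_k ,
\end{equation*}
and an induction on the strengthened claim $jb_0\gamma_j\geq\lambda\gamma_{j-1}+\sum_{k=1}^{j-1}\bigl(\sum_{m\geq j-k+1}b_m\bigr)\gamma_k$ shows $\gamma_j>0$ for all $j\geq1$. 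Consequently the reduced coefficient equals $-\lambda-\sum_{k=1}^{c-1}u(\lambda)^{k-1}(c-k)b_0(1-u(\lambda))\gamma_k\leq-\lambda<0$ because $0<u(\lambda)<1$, so the $c\times c$ system is nonsingular for every $\lambda>0$; this specializes to your $-\lambda(2-u(\lambda))$ when $c=2$. With that observation added, your proposal is a complete proof and in fact slightly more careful than the paper's.
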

\begin{proof}
By the Kolmogorov forward equations we have {\small
\begin{equation}\label{phiijlamda}
\begin{cases}
-\lambda
\phi_{i0}^*(\lambda)+b_0\phi_{i1}^*(\lambda)=-\delta_{i0},\\
(b_1-\lambda)\phi_{i1}^*(\lambda)+2b_0\phi_{i2}^*(\lambda)=-\delta_{i1},\\
\ \ \ \ \ \ \cdots\\
\sum_{k=1}^{c-1}\phi_{ik}^*(\lambda)b_{c-k+1}+[b_1-(c-1)b_0-\lambda]\phi_{ic}^*(\lambda)
+cb_0\phi_{ic+1}^*(\lambda)=-\delta_{ic},\\
\sum_{k=1}^{j-1}\phi_{ik}^*(\lambda)b_{j-k+1}+[b_1-(c-1)b_0-\lambda]\phi_{ij}^*(\lambda)
+cb_0\phi_{ij+1}^*(\lambda)=-\delta_{ij},\ \ \ (j\geq c+1).
\end{cases}
\end{equation}}
 Thus,
\begin{equation*}
\lambda\sum_{j=0}^\infty
\phi_{ij}^*(\lambda)s^j-s^i=\sum_{k=1}^{c-1}s^{k-1}B_k(s)\phi_{ik}^*(\lambda)+B_c(s)\sum_{j=c}^\infty
\phi_{ij}^*(\lambda)s^{j-1},
\end{equation*}
Since $\lambda u(\lambda)=B_c(u(\lambda))$, we have
\begin{equation}\label{the condition of ulamda}
-\lambda
\phi_{i0}^*(\lambda)-\sum_{k=1}^{c-1}u(\lambda)^{k-1}[B_c(u(\lambda))-B_k(u(\lambda))]
\phi_{ik}^*(\lambda)=-u(\lambda)^i.
\end{equation}
Combining \eqref{the condition of ulamda} with the first $c-1$
equations of \eqref{phiijlamda} we can  obtain the unique
$(\phi_{ij}^*(\lambda);\ i\geq0,\ 0\leq j\leq c-1)$. The proof is
complete.
\end{proof}
\begin{remark}\label{remark 2.1}
As usual, it is very difficult to get the transition probability or
transition function $(p_{ij}^*(t);i,j\in {\bf E})$ for a general
Markov process. We find it particularly noteworthy that we can
obtain the resolvent $(\phi_{ij}^*(\lambda);\ i\geq0,\ 0\leq j\leq
c-1)$ of the transition probability $(p_{ij}^*(t);\ i\geq0,\ 0\leq
j\leq c-1)$ from Theorem~\ref{the resolvent of transition
probability}. Furthermore, by carefully checking the proof of
Theorem~\ref{the resolvent of transition probability} and using
\eqref{phiijlamda}, we can obtain all the resolvent
$(\phi_{ij}^*(\lambda);\ i,j\in {\bf E})$ of the transition
probability $(p_{ij}^*(t); i,j\in {\bf E})$.
\end{remark}
\par
By the Kolmogorov forward equation $\Phi^*(\lambda)(\lambda
I-Q^*)=I$ and Lemma~\ref{abclele 2.2}, we can obtain the following
theorem.
\begin{theorem}\label{abthth 2.1}
The generating functions of the $Q^*$-resolvent take the form
\begin{align}\label{bbb 2.13}
L_i(\lambda,s)&:=\sum_{j=0}^\infty\phi_{ij}^*(\lambda)s^j\nonumber \\
&=\frac{B_c(s)\phi_{i0}^*(\lambda)+\sum_{k=1}^{c-1}\phi_{ik}^*(\lambda)s^k[B_c(s)-B_k(s)]-s^{i+1}}{U_\lambda(s)}\
\ (i\geq0),
\end{align}
where $U_\lambda(s)$ is defined by \eqref{bbb 2.4}, and
$(\phi_{ik}^*(\lambda); i\geq0,\ 1\leq k\leq c-1)$ is given by
Theorem~\emph{\ref{the resolvent of transition probability}}. In
particular, $\phi_{00}^*(\lambda)=\frac{1}{\lambda},$ whilst, for
$i,j\geq1,$
\begin{equation}\label{bbb 2.14}
\phi_{i0}^*(\lambda)=\frac{u(\lambda)^i-\sum_{k=1}^{c-1}\phi_{ik}^*(\lambda)u(\lambda)^{k-1}(c-k)
b_0(1-u(\lambda))}{\lambda}\ \ \mbox{and}\ \ \phi_{0j}^*(\lambda)=0,
\end{equation}
where $u(\lambda)(\lambda>0)$ is the unique root of $U_\lambda(s)=0$
on $[0,1]$.
\end{theorem}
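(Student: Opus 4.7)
The plan is to derive the functional form of $L_i(\lambda,s)$ directly from the generating function identity established inside the proof of Theorem~\ref{the resolvent of transition probability}, and then to read off the boundary values $\phi_{00}^*(\lambda)$, $\phi_{0j}^*(\lambda)$ $(j\geq 1)$ and the formula for $\phi_{i0}^*(\lambda)$ $(i\geq 1)$ by exploiting the absorption of state $0$ under $Q^*$ together with the analyticity of $L_i(\lambda,\cdot)$ at the root $u(\lambda)$.

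First I would start from the identity
\begin{equation*}
\lambda\sum_{j=0}^{\infty}\phi_{ij}^*(\lambda)s^j - s^i = \sum_{k=1}^{c-1} s^{k-1}B_k(s)\phi_{ik}^*(\lambda) + B_c(s)\sum_{j=c}^{\infty}\phi_{ij}^*(\lambda)s^{j-1}
\end{equation*}
already obtained in the previous proof. Writing the tail as $s^{-1}\bigl[L_i(\lambda,s)-\sum_{k=0}^{c-1}\phi_{ik}^*(\lambda)s^k\bigr]$, multiplying through by $s$, and collecting the $L_i(\lambda,s)$-terms on one side produces
\begin{equation*}
L_i(\lambda,s)\,U_\lambda(s) = -s^{i+1} + B_c(s)\phi_{i0}^*(\lambda) + \sum_{k=1}^{c-1} s^k\bigl[B_c(s)-B_k(s)\bigr]\phi_{ik}^*(\lambda),
\end{equation*}
which, after dividing by $U_\lambda(s)=B_c(s)-\lambda s$, is exactly \eqref{bbb 2.13}.

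To pin down \eqref{bbb 2.14}, I would next note that $q_{0j}^{*}=0$ for every $j\in{\bf E}$, so state $0$ is absorbing under $Q^*$; hence $p_{00}^*(t)\equiv1$ and $p_{0j}^*(t)\equiv0$ for $j\geq1$, whose Laplace transforms deliver $\phi_{00}^*(\lambda)=1/\lambda$ and $\phi_{0j}^*(\lambda)=0$ at once. For $\phi_{i0}^*(\lambda)$ with $i\geq1$, the row-sum bound $\sum_j\phi_{ij}^*(\lambda)\leq1/\lambda$ together with nonnegativity of the coefficients shows that $L_i(\lambda,\cdot)$ is continuous on $[-1,1]$ and analytic on $(-1,1)$, so the value $L_i(\lambda,u(\lambda))$ is finite. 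Since $U_\lambda(u(\lambda))=0$ by Lemma~\ref{abclele 2.2}, the numerator in \eqref{bbb 2.13} must vanish at $s=u(\lambda)$. Substituting $B_c(u(\lambda))=\lambda u(\lambda)$ and the elementary identity $B_c(s)-B_k(s)=(c-k)b_0(1-s)$ that is immediate from \eqref{bbb 2.1}, then dividing the resulting relation by $u(\lambda)>0$ and solving for $\phi_{i0}^*(\lambda)$, yields the stated expression in \eqref{bbb 2.14}.

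The only delicate step, which I would flag as the main obstacle, is precisely the appeal to the regularity of $L_i(\lambda,\cdot)$ at $s=u(\lambda)$: this is not pure algebra but a genuine analytic input, a removable-singularity argument that converts the factored expression $L_i(\lambda,s)=(\text{numerator})/U_\lambda(s)$ into a scalar equation determining $\phi_{i0}^*(\lambda)$. Once that point is settled, the remainder is bookkeeping already present in the companion proof of Theorem~\ref{the resolvent of transition probability}.
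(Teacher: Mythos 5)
Your argument is correct and follows essentially the paper's own (sketched) route: the paper obtains \eqref{bbb 2.13} from the Kolmogorov forward equations via the same generating-function identity displayed in the proof of Theorem~\ref{the resolvent of transition probability}, and \eqref{bbb 2.14} by evaluating at the root $s=u(\lambda)$ of $U_\lambda(s)=0$ (using $B_c(s)-B_k(s)=(c-k)b_0(1-s)$ and the absorption of state $0$ under $Q^*$), exactly as you do. Your explicit justification of the finiteness of $L_i(\lambda,\cdot)$ at $u(\lambda)\in(0,1)$ via $\sum_j\phi_{ij}^*(\lambda)\leq 1/\lambda$ is the right way to make the paper's implicit substitution rigorous.
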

\par
Similar to that considered in Lemma 3.1 in Li and Chen
\cite{Li2006}, we immediately get the following lemma.
\begin{lemma}
\label{abclele 2.5} For any $i\geq0$, we have
\begin{equation}\label{aa 2.4}
\int_0^\infty p_{ik}^*(t)dt<\infty,\ \ \ k\geq1.
\end{equation}
\end{lemma}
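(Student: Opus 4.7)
The plan is to interpret the integral probabilistically as an expected total occupation time, and to prove its finiteness via a geometric argument. First, by monotone convergence, $\int_0^\infty p_{ik}^*(t)\,dt = \lim_{\lambda\downarrow 0}\phi_{ik}^*(\lambda)$, so it suffices to show this limit is finite. For $i=0$ and $k\geq 1$ the conclusion is immediate from \eqref{bbb 2.14}, which gives $\phi_{0k}^*(\lambda)=0$. I therefore fix $i\geq 1$ and $k\geq 1$ for the remainder of the argument.

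Let $T_0 := \inf\{t\geq 0 : X_t^* = 0\}$ (with $T_0=+\infty$ if absorption never occurs), and let $\tau_k$ denote the first hitting time of $k$. The strong Markov property applied at $\tau_k$ gives
\[
\int_0^\infty p_{ik}^*(t)\,dt \;=\; E_i\!\left[\int_0^{T_0}\mathbf{1}_{\{X_t^*=k\}}\,dt\right] \;\leq\; E_k\!\left[\int_0^{T_0}\mathbf{1}_{\{X_t^*=k\}}\,dt\right],
\]
so I need only bound the right-hand side. Each sojourn of $X^*$ at $k$ is exponentially distributed with finite mean $(-q_{kk}^*)^{-1}$, and by the strong Markov property the total number of visits to $k$ before absorption is geometric with parameter
\[
p_k \;:=\; P_k(T_0 < T_k^+),
\]
where $T_k^+$ is the first return time to $k$. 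Consequently the expected total time spent at $k$ equals $[p_k\,(-q_{kk}^*)]^{-1}$, and finiteness of the occupation time reduces to showing $p_k>0$.

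The only nontrivial point is $p_k>0$; I expect this to be the main (but still modest) obstacle. I will handle it by a direct pathwise construction: consider the event that, starting from $k$, the process makes $k$ successive downward jumps $k\to k-1\to\cdots\to 1\to 0$ with no intermediate upward jump. At each state $j\in\{1,\ldots,k\}$, the probability that the very next jump is the departure to $j-1$ equals $\min(j,c)\,b_0/(-q_{jj}^*)$, which is strictly positive because $b_0>0$ is assumed and $-q_{jj}^*<\infty$. Iterating the strong Markov property $k$ times, the probability of this joint event is at least $\prod_{j=1}^{k} \min(j,c)\,b_0/(-q_{jj}^*)>0$; on this event the process never exceeds level $k$, so it hits $0$ strictly before any return to $k$. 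This yields $p_k>0$ and completes the argument.
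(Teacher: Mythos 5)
Your argument is correct, but it is not the paper's route: the paper gives no explicit proof of this lemma at all, merely remarking that it follows ``similar to Lemma 3.1 in Li and Chen (2006)'' for Markov branching processes. Your proposal replaces that appeal to the literature with a self-contained, elementary occupation-time argument: since $Q^*$ is bounded and conservative the minimal $Q^*$-process is honest, so $\int_0^\infty p_{ik}^*(t)\,dt$ is indeed the expected total time spent in $k$; state $0$ is absorbing, each sojourn at $k$ has finite mean $(-q_{kk}^*)^{-1}$, and the explicit path $k\to k-1\to\cdots\to 1\to 0$ of consecutive service completions has positive probability because $b_0>0$ and $-q_{jj}^*=\min(j,c)b_0+\sum_{m\geq 2}b_m\in(0,\infty)$, so the number of visits to $k$ is stochastically dominated by a geometric random variable. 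What your approach buys is transparency and independence from the branching-process reference; the paper's citation buys brevity (and in the paper the quantities $\lim_{\lambda\to 0^+}\phi_{ik}^*(\lambda)=m_i^*(k)$ are anyway computed later from the resolvent equations, so the analytic machinery is already on hand). One small imprecision to fix: the total number of visits to $k$ is geometric with parameter $P_k(T_k^+=\infty)$, not with your $p_k=P_k(T_0<T_k^+)$ --- when $B_c^\prime(1)>0$ the process can drift to infinity without ever hitting $0$, so the two probabilities differ. Since $P_k(T_k^+=\infty)\geq p_k$, the sentence ``the expected total time spent at $k$ equals $[p_k(-q_{kk}^*)]^{-1}$'' should read ``is at most $[p_k(-q_{kk}^*)]^{-1}$''; with that change the finiteness conclusion, and hence the lemma, stands.
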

\par
Let $\{X^*_t;\ t\geq0\}$ denote the $Q^*$-process. Define the
extinction time as
\begin{equation*}
\tau^*_0:=
\begin{cases}
\inf\{t>0;\ X^*_t=0\}& \mbox{if $X^*_t=0$ for\ some\ $t>0$},\\
\infty& \mbox{if $X^*_t>0$ for\ all $t>0$}
\end{cases}
\end{equation*}
 and $w^*_k(t):=P(\tau^*_0\leq t|X^*_0=k).$ From \eqref{bbb 2.14}, we can immediately get the following
conclusion.
\begin{lemma}\label{abcthth 2.2}
Suppose that the $Q^*$-process $\{X^*_t;\ t\geq0\}$ starts from
$X^*_0=k>0$. Then the Laplace transform of $w^*_k(t)$ is
\begin{equation*}\label{bbb 2.15}
\int_0^\infty e^{-\lambda t}P(\tau^*_0\leq
t|X^*_0=k)\emph{d}t=\frac{u(\lambda)^k-\sum_{i=1}^{c-1}\phi_{ki}^*(\lambda)u(\lambda)^{i-1}
(c-i)b_0(1-u(\lambda))}{\lambda},
\end{equation*}
where $u(\lambda)$ is the unique root of $U_\lambda(s)=0$ on $[0,1]$
and thereby possesses the properties in Lemma~\emph{\ref{abclele
2.3}}, $(\phi_{ki}^*(\lambda);\ k\geq1,\ 1\leq i\leq c-1)$ can be
obtained by Theorem~\emph{\ref{the resolvent of transition
probability}}.
\end{lemma}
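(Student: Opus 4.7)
The plan is to identify $w_k^*(t)$ with a single entry of the transition function of the $Q^*$-process, and then to quote the explicit formula for the corresponding resolvent already obtained in Theorem~\ref{abthth 2.1}.

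First I would observe that state $0$ is absorbing for the $Q^*$-process. Indeed, the defining rules~\eqref{bbb 1.2} specify $q_{ij}^*$ only for $i\ge 1$, so $q_{0j}^*=0$ for every $j$, which means the process cannot leave $0$ once it is reached. Consequently, for $X_0^*=k\ge 1$, the events $\{\tau_0^*\le t\}$ and $\{X_t^*=0\}$ coincide, and therefore
\begin{equation*}
w_k^*(t)=P(\tau_0^*\le t\mid X_0^*=k)=p_{k0}^*(t).
\end{equation*}

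Next, taking Laplace transforms of both sides yields
\begin{equation*}
\int_0^\infty e^{-\lambda t}w_k^*(t)\,dt=\int_0^\infty e^{-\lambda t}p_{k0}^*(t)\,dt=\phi_{k0}^*(\lambda).
\end{equation*}
Substituting the expression for $\phi_{k0}^*(\lambda)$ from~\eqref{bbb 2.14} (which is legitimate since $k\ge 1$, so we are in the second branch of that formula) gives the displayed identity. The quantities $\phi_{ki}^*(\lambda)$ for $1\le i\le c-1$ appearing in the numerator are precisely those produced by Theorem~\ref{the resolvent of transition probability}, and $u(\lambda)$ is the root singled out in Lemma~\ref{abclele 2.2}.

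There is no real obstacle here: once the absorbing structure of $Q^*$ at $0$ is recognized, the identification $w_k^*(t)=p_{k0}^*(t)$ is immediate and the remainder is a direct appeal to \eqref{bbb 2.14}. The only point requiring a brief remark is the restriction to $k\ge 1$; for $k=0$ the process starts at the absorbing state and the statement is trivial (both sides reduce to $1/\lambda$), so the nontrivial content of the lemma is contained in the case $k\ge 1$ handled above.
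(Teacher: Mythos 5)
Your argument is correct and is exactly the reasoning the paper leaves implicit when it says the lemma follows ``from \eqref{bbb 2.14}'': state $0$ is absorbing for $Q^*$, so $w_k^*(t)=p_{k0}^*(t)$, whose Laplace transform is the resolvent entry $\phi_{k0}^*(\lambda)$ given by \eqref{bbb 2.14}. Nothing is missing; your remark on the trivial case $k=0$ is a harmless addition.
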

\par
\begin{theorem}\label{the extinction probability}
For the $Q^*$-process $\{X^*_t;\ t\geq0\}$, denote the extinction
probability
$e^*_k=P(\tau^*_0<\infty|X^*_0=k)=\lim\limits_{t\rightarrow\infty}p_{k0}^*(t)\
\ (k\geq1)$ and $m^*_i(k)=\int_0^\infty p_{ki}^*(t)dt\ (i\geq1)$.
\par
\emph{(i)}\ If $B_c^\prime(1)\leq 0$, then for any $k\geq 1$,
$e^*_k=1$\emph{;}
\par
\emph{(ii)}\ if $B_c^\prime(1)>0$, then for any $k\geq1$, $e^*_k$
and $(m^*_i(k);\ 1\leq i\leq c-1)$ is the unique solution of the
following linear equations
\begin{equation}\label{eklinearequations}
\begin{cases}
e^*_k=u^k-\sum_{i=1}^{c-1}m^*_i(k)u^{i-1}(c-i)b_0(1-u),\\
b_0m^*_1(k)=e^*_k,\\
b_1m^*_1(k)+2b_0m^*_2(k)=-\delta_{k1},\\
\ \ \ \ \cdots\\
\sum_{i=1}^{j-1}b_{j-i+1}m^*_i(k)+[b_1-(j-1)b_0]m^*_j(k)+(j+1)b_0m^*_{j+1}(k)=-\delta_{kj},\\
\ \ \ \ \cdots\\
\sum_{i=1}^{c-3}b_{c-i-1}m^*_i(k)+[b_1-(c-3)b_0]m^*_{c-2}(k)+(c-1)b_0m^*_{c-1}(k)=-\delta_{kc-2},
\end{cases}
\end{equation}
where $u$ is the smallest root of the equation $B_c(s)=0$ on $[0,1]$
with $u=1\ \mbox{if} \ B_c^\prime(1)\leq 0\ \ \mbox{and}\ \ u<1\
\mbox{if}\ B_c^\prime(1)>0$. Moreover, all the $(m^*_i(k);\ k\geq1,\
i\geq1)$ can be obtained.
\par
\emph{(iii)}\ The mean extinction time is
\begin{equation*}\label{bbb 2.18}
E(\tau^*_0|X^*_0=k)=
\begin{cases}
-\frac{1}{B_c^\prime(1)}[k+\sum_{i=1}^{c-1}m^*_i(k)(c-i)b_0]&\ \mbox{if $B_c^\prime(1)< 0$},\\
\infty&\ \mbox{if $B_c^\prime(1)\geq0$},
\end{cases}
\end{equation*}
where $(m^*_i(k);\ 1\leq i\leq c-1)$ is given by
\eqref{eklinearequations}.
\end{theorem}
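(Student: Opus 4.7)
The plan is to pass to the limit $\lambda\downarrow 0^+$ in the Laplace transform identity of Lemma~\ref{abcthth 2.2} and in the Kolmogorov forward equations~\eqref{phiijlamda}, tracking the boundary behaviour of $u(\lambda)$ supplied by Lemma~\ref{abclele 2.3}. Two analytic facts underpin everything. First, by Lemma~\ref{abclele 2.5} and monotone convergence, $\phi^*_{kj}(\lambda)\to m^*_j(k)<\infty$ as $\lambda\downarrow 0^+$ for every $k\ge 0$ and $j\ge 1$; second, by the final-value identity applied to the non-decreasing function $w^*_k(t)\uparrow e^*_k$, one has $\lambda\phi^*_{k0}(\lambda)\to e^*_k$.

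For parts (i) and (ii), I would multiply the expression in Lemma~\ref{abcthth 2.2} by $\lambda$ and let $\lambda\downarrow 0^+$ to obtain
\begin{equation*}
e^*_k=\lim_{\lambda\downarrow 0^+}\Bigl[u(\lambda)^k-\sum_{i=1}^{c-1}\phi^*_{ki}(\lambda)\,u(\lambda)^{i-1}(c-i)b_0(1-u(\lambda))\Bigr].
\end{equation*}
Under $B'_c(1)\le 0$, Lemma~\ref{abclele 2.3}(iv) gives $u(\lambda)\uparrow 1$, every summand vanishes, and $e^*_k=1$, settling (i). Under $B'_c(1)>0$, $u<1$ and the display above is precisely the first equation of~\eqref{eklinearequations}. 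The remaining equations arise by letting $\lambda\downarrow 0^+$ in~\eqref{phiijlamda} for $j=0,1,\dots,c-2$: the $j=0$ line yields $b_0 m^*_1(k)=e^*_k$, while for $j\ge 1$ the term $\lambda\phi^*_{kj}(\lambda)$ disappears because of the boundedness noted above. These are exactly the $c-1$ equations of~\eqref{phiijlamda} that involve only the bounded unknowns $m^*_1(k),\dots,m^*_{c-1}(k)$, so together with the boundary identity they form a closed $c\times c$ system. Uniqueness is immediate from the Hessenberg structure of equations (2) through ($c$) of~\eqref{eklinearequations}: the coefficient of the new unknown $m^*_{j+1}(k)$ in the $j$-th line is $(j+1)b_0>0$, so one solves recursively for $m^*_1(k),\dots,m^*_{c-1}(k)$ as affine functions of $e^*_k$, and substitution into the first equation pins down $e^*_k$.

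For part (iii), write $E(\tau^*_0\mid X^*_0=k)=\int_0^\infty(1-w^*_k(t))\,\mathrm{d}t=\lim_{\lambda\downarrow 0^+}(\lambda^{-1}-\hat{w}^*_k(\lambda))$ and substitute Lemma~\ref{abcthth 2.2} to get
\begin{equation*}
E(\tau^*_0\mid X^*_0=k)=\lim_{\lambda\downarrow 0^+}\frac{1-u(\lambda)^k+\sum_{i=1}^{c-1}\phi^*_{ki}(\lambda)u(\lambda)^{i-1}(c-i)b_0(1-u(\lambda))}{\lambda}.
\end{equation*}
If $B'_c(1)>0$, the numerator tends to $1-u^k>0$ while the denominator vanishes, giving $+\infty$; if $B'_c(1)=0$, the numerator tends to $0$ but Lemma~\ref{abclele 2.3}(v) forces $(1-u(\lambda)^k)/\lambda\to\infty$, again yielding $+\infty$. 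If $B'_c(1)<0$, applying Lemma~\ref{abclele 2.3}(v) term by term produces the announced closed form. The main obstacle of the whole argument is really the bookkeeping in~(ii): one must identify exactly which $c$ equations of~\eqref{phiijlamda} survive the passage to the limit, pair them correctly with the Laplace boundary identity produced from Lemma~\ref{abcthth 2.2}, and check that the resulting system is triangular enough for unique solvability. Everything else is a direct application of Lemma~\ref{abclele 2.3}.
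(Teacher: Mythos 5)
Your proposal is correct and follows essentially the same route as the paper: apply the Tauberian/monotone-convergence limits $\lambda\phi^*_{k0}(\lambda)\to e^*_k$ and $\phi^*_{kj}(\lambda)\to m^*_j(k)$ together with Lemma~\ref{abclele 2.3} to the identity of Lemma~\ref{abcthth 2.2} (equivalently \eqref{bbb 2.14}) and to the system \eqref{lamdaphii0starlamda}/\eqref{phiijlamda}, then read off (i), (ii) and (iii). The only points treated more tersely than they deserve are the unique-solvability claim in (ii) (your recursive substitution still requires the coefficient of $e^*_k$ in the first equation to be nonzero) and the ``moreover'' clause that the remaining $m^*_i(k)$, $i\geq c-1$, follow from the limiting equations of \eqref{phiijlamda}; the paper is equally brief on both.
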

\begin{proof}
Using Lemma~\ref{abclele 2.5}, Lemma~\ref{abcthth 2.2}, \eqref{bbb
2.5}, \eqref{aa 2.4}, in combination with the Tauberian theorem,
yields
\begin{align}\label{bbb 2.19}
e^*_k&=\lim_{\lambda\rightarrow0^+}\lambda \phi_{k0}^*(\lambda)
=\lim_{\lambda\rightarrow0^+}\Big(u(\lambda)^k-\sum_{i=1}^{c-1}\phi_{ki}^*(\lambda)
u(\lambda)^{i-1}(c-i)b_0(1-u(\lambda))\Big)\nonumber \\
&=
\begin{cases}
1,&\ \mbox{if $B_c^\prime(1)\leq 0$},\\
u^k-\sum_{i=1}^{c-1}m^*_i(k)u^{i-1}(c-i)b_0(1-u)<1,&\ \mbox{if
$B_c^\prime(1)>0$},
\end{cases}
\end{align}
Thus (i) is proven. By \eqref{bbb 2.5}, \eqref{aa 2.4} and letting
$\lambda\rightarrow0^+$ in every equation of
\eqref{lamdaphii0starlamda}, we can immediately obtain
\eqref{eklinearequations}. Combining \eqref{eklinearequations} with
\eqref{phiijlamda} we can obtain all the $(m^*_i(k);\ k\geq1,\
i\geq1)$, which completes the proof of (ii).
\par
Using the Tauberian theorem once again and in conjunction with the
result \eqref{bbb 2.5}, \eqref{bbb 2.6} of Lemma~\ref{abclele 2.3}
we obtain
\begin{align*}
E(\tau^*_0|X^*_0=k)&=\int_0^\infty
P(\tau^*_0>t|X^*_0=k)\emph{d}t=\lim_{\lambda\rightarrow0^+}
\frac{1-\lambda\phi_{k0}^*(\lambda)}{\lambda}\nonumber \\
&=
\begin{cases}
-\frac{1}{B_c^\prime(1)}[k+\sum_{i=1}^{c-1}m^*_i(k)(c-i)b_0]&\ \mbox{if $B_c^\prime(1)< 0$},\\
\infty&\ \mbox{if $B_c^\prime(1)\geq0$},
\end{cases}
\end{align*}
thus, (iii) is proven. The proof is complete.
\end{proof}
\par

 \setcounter{section}{3}
 \setcounter{equation}{0}
 \setcounter{theorem}{0}
  \setcounter{remark}{0}
 \setcounter{lemma}{0}
 \setcounter{corollary}{0}
\noindent {\large \bf 3. The $M^X/M/c$ queue with resurrection}
\vspace{3mm}
\par
We now extend the process by allowing mass arrivals of size $j$ to
occur at rate $h_j$ when the queue is empty and $\beta=0$. So the
 $q$-matrix is $\tilde{Q}=Q^*+Q_s$, where $Q_s$ is given by
\eqref{qsdefinition}. In addition to the generating functions $B(s)$
and $B_i(s)$ defined in \eqref{bbb 2.1}, we need to define
\begin{equation}\label{bbb 3.1}
H(s):=\sum_{j=1}^\infty h_js^j.
\end{equation}
Obviously, $H(s)$ is well defined on $[-1,1]$ and $H(1)=h>0$.
Moreover, denote
\begin{equation}\label{bbb 3.2}
\mu_1=H^\prime(1)=\sum_{j=1}^\infty jh_j,
\end{equation}
 note that $\mu_1$ is not currently presumed to be finite.
\par
By Theorem~\ref{the extinction probability}, we have the following
corollary.
\begin{corollary}\label{busy period}
For the $M^X/M/c$ queue with resurrection, the mean busy period is
\begin{equation*}
B=\begin{cases}-\frac{1}{hB_c^\prime(1)}\sum_{k=1}^{\infty}h_k[k+\sum_{i=1}^{c-1}m^*_i(k)(c-i)b_0],\
& if\ {B'_c(1)<0}\\
\infty,\ & if \ {B'_c(1)\geq 0}.
\end{cases}
\end{equation*}
\end{corollary}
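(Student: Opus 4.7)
The plan is to reduce the computation of the mean busy period directly to the extinction-time formula of Theorem~\ref{the extinction probability}(iii), using the structure of $Q_s$ to describe how the busy period is initiated.

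First, I would recall how a busy period is built from the dynamics encoded in $\tilde Q = Q^* + Q_s$. While the queue is in state $0$, by \eqref{qsdefinition} it waits an exponentially distributed sojourn time of rate $h = \sum_{j\ge 1} h_j$, after which a mass arrival of size $k$ occurs with probability $h_k/h$, placing the chain in state $k\ge 1$. From that moment on, the behaviour of $\tilde X_t$ until it next hits $0$ is governed only by $Q^*$ (since $Q_s$ acts only on state $0$), so the remaining portion of the busy period is exactly distributed as the extinction time $\tau_0^*$ of the $Q^*$-process started from $X_0^* = k$.

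Next, by the strong Markov property at the time of the initial mass arrival, and since the initial idle time contributes $1/h$ to the cycle but is conventionally excluded from the busy period, the mean busy period is obtained by conditioning on the initial bulk size:
\begin{equation*}
B \;=\; \sum_{k=1}^\infty \frac{h_k}{h}\, E(\tau_0^* \mid X_0^* = k).
\end{equation*}
Substituting the formula for $E(\tau_0^*\mid X_0^*=k)$ from Theorem~\ref{the extinction probability}(iii) term by term then gives, in the case $B_c'(1) < 0$,
\begin{equation*}
B \;=\; -\frac{1}{h\, B_c'(1)} \sum_{k=1}^\infty h_k\!\left[\,k + \sum_{i=1}^{c-1} m_i^*(k)(c-i)b_0\right],
\end{equation*}
while for $B_c'(1)\ge 0$ each conditional expectation is $+\infty$, so a single term of the sum already forces $B = \infty$ (and $h>0$ guarantees at least one such term).

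The only genuine point to verify is the interchange of sum and expectation in the $B_c'(1)<0$ case, which is immediate by Tonelli since all summands are nonnegative. The decomposition itself relies only on the fact that $Q_s$ is supported on the row $i=0$, so it does not perturb the extinction dynamics once the chain has left state $0$; this is precisely why Theorem~\ref{the extinction probability}(iii) can be plugged in without modification, and there is no substantive obstacle beyond this bookkeeping.
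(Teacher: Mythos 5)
Your proposal is correct and follows essentially the same route as the paper, which states the corollary as an immediate consequence of Theorem~\ref{the extinction probability}: condition on the size $k$ of the resurrecting batch (probability $h_k/h$), note that off state $0$ the $\tilde Q$-dynamics coincide with $Q^*$, and average the mean extinction times from part (iii). Your explicit justification of the conditioning, the Tonelli interchange, and the $B_c'(1)\ge 0$ case simply fills in the bookkeeping the paper leaves implicit.
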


\par
Let $\tilde{P}(t)=(\tilde{p}_{ij}(t);\ i,j\geq0)$ and
$\tilde{R}(\lambda)=(\tilde{r}_{ij}(\lambda);\ i,j\geq0)$ be the
$\tilde{Q}$-function and $\tilde{Q}$-resolvent, respectively.
Similar to the proof of Theorem 3.1 in Chen and Renshaw
\cite{Ch2004}, using the resolvent decomposition theorem (see Chen
and Renshaw \cite{Ch1990}), we have the following conclusion.
\begin{theorem}\label{abcthth 3.1}
For $\tilde{R}(\lambda)=(\tilde{r}_{ij}(\lambda);\ i,j\geq0)$, we
have
\begin{align}\label{bbb 3.3}
\tilde{r}_{00}(\lambda)&=\Big[\lambda+\lambda\sum_{i=1}^\infty\sum_{j=1}^\infty
h_i\phi_{ij}^*(\lambda)\Big]^{-1},\\ \label{bbb 3.4}
\tilde{r}_{i0}(\lambda)&=\tilde{r}_{00}(\lambda)b_0\phi_{i1}^*(\lambda)\
\ \ (i\geq1),\\ \label{bbb 3.5}
\tilde{r}_{0j}(\lambda)&=\tilde{r}_{00}(\lambda)\sum_{i=1}^\infty
h_i\phi_{ij}^*(\lambda)\ \ \ (j\geq1),\\ \label{bbb 3.6}
\tilde{r}_{ij}(\lambda)&=\phi_{ij}^*(\lambda)+\tilde{r}_{i0}(\lambda)\sum_{k=1}^\infty
h_k\phi_{kj}^*(\lambda)\ \ \ (i,j\geq1),
\end{align}
where $\Phi^*(\lambda)=(\phi_{ij}^*(\lambda);\ i,j\geq0)$ is the
$Q^*$-resolvent given in Theorem~\emph{\ref{abthth 2.1}}.
\end{theorem}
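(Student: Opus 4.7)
The plan is to exploit the fact that the perturbation $Q_s$ is supported entirely on row $0$, so the Kolmogorov forward equation $\tilde{R}(\lambda)(\lambda I - \tilde{Q}) = I$ decouples neatly from that of $\Phi^*(\lambda)$ except through the single column $\tilde{r}_{i0}(\lambda)$. Writing this forward equation entry-wise using $\tilde{Q} = Q^* + Q_s$ and noting that $q^{(s)}_{kj} = 0$ for all $k \geq 1$, while $q^{(s)}_{0j} = h_j$ for $j \geq 1$ and $q^{(s)}_{00} = -h$, I obtain
\begin{equation*}
\lambda\, \tilde{r}_{ij}(\lambda) - \sum_{k} \tilde{r}_{ik}(\lambda)\, q^*_{kj} = \delta_{ij} + h_j\, \tilde{r}_{i0}(\lambda) \qquad (j \geq 1),
\end{equation*}
together with the boundary relation $(\lambda + h)\tilde{r}_{i0}(\lambda) - b_0\, \tilde{r}_{i1}(\lambda) = \delta_{i0}$ (which uses $q^*_{k0} = b_0 \delta_{k1}$).

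The family in $j \geq 1$ has the same linear operator as the one defining $\phi^*_{ij}(\lambda)$, differing only in the extra source term $h_j\, \tilde{r}_{i0}(\lambda)$. Superposition and uniqueness then force the ansatz
\begin{equation*}
\tilde{r}_{ij}(\lambda) = \phi^*_{ij}(\lambda) + \tilde{r}_{i0}(\lambda) \sum_{k=1}^{\infty} h_k\, \phi^*_{kj}(\lambda), \qquad i \geq 0,\ j \geq 1,
\end{equation*}
which I verify by direct substitution into the forward equation, invoking $\lambda \phi^*_{lj} - \sum_k \phi^*_{lk} q^*_{kj} = \delta_{lj}$. Specialising to $i = 0$ and recalling from Theorem~\ref{abthth 2.1} that $\phi^*_{00}(\lambda) = 1/\lambda$ and $\phi^*_{0j}(\lambda) = 0$ for $j \geq 1$ (state $0$ is absorbing in $Q^*$) produces \eqref{bbb 3.5} at once. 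For $i \geq 1$, substituting the ansatz for $\tilde{r}_{i1}(\lambda)$ into $(\lambda + h)\tilde{r}_{i0}(\lambda) = b_0\, \tilde{r}_{i1}(\lambda)$ and using the $Q^*$-forward identity at state $0$, namely $b_0\, \phi^*_{i1}(\lambda) = \lambda\, \phi^*_{i0}(\lambda)$ for $i \geq 1$, allows me to factor out $\phi^*_{i1}(\lambda)$ and read off \eqref{bbb 3.4}.

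Finally, I extract $\tilde{r}_{00}(\lambda)$ from the $(i,j) = (0,0)$ boundary equation $(\lambda + h)\tilde{r}_{00}(\lambda) - b_0\, \tilde{r}_{01}(\lambda) = 1$ by inserting \eqref{bbb 3.5} with $j = 1$, giving the compact form $\tilde{r}_{00}(\lambda) = \bigl[\lambda + h - b_0 \sum_{i \geq 1} h_i\, \phi^*_{i1}(\lambda)\bigr]^{-1}$. To recast this as \eqref{bbb 3.3} I apply $b_0 \phi^*_{i1} = \lambda \phi^*_{i0}$ for $i \geq 1$ together with the honesty identity $\lambda \sum_{j=0}^{\infty} \phi^*_{ij}(\lambda) = 1$, which holds because $Q^*$ is bounded and conservative, so the minimal $Q^*$-process is nonexplosive. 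The only real obstacle is bookkeeping: verifying the ansatz requires interchanging summations in $\sum_k \bigl(\sum_l h_l \phi^*_{lk}\bigr) q^*_{kj}$ and showing that the ``compact'' and ``honest'' forms of $\tilde{r}_{00}(\lambda)$ agree. Boundedness of $Q^*$ makes every double sum absolutely convergent, so these exchanges are routine, and the equivalence of the two expressions for $\tilde{r}_{00}$ is precisely the identity that will make the subsequent recurrence and equilibrium analysis in Section 3 accessible.
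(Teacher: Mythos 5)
Your proposal is correct, but it follows a genuinely different route from the paper. The paper gives essentially no computation here: it invokes the general resolvent decomposition theorem of Chen and Renshaw \cite{Ch1990} and says the argument parallels Theorem 3.1 of \cite{Ch2004}, so the structure of \eqref{bbb 3.3}--\eqref{bbb 3.6} is read off from that ready-made machinery. You instead re-derive the decomposition from scratch: since $Q_s$ lives only on row $0$ and row $0$ of $Q^*$ vanishes, the forward equations $\tilde{R}(\lambda)(\lambda I-\tilde{Q})=I$ for $j\geq1$ differ from those for $\Phi^*(\lambda)$ only by the source term $h_j\tilde{r}_{i0}(\lambda)$, so superposition gives the ansatz $\tilde{r}_{ij}=\phi^*_{ij}+\tilde{r}_{i0}\sum_k h_k\phi^*_{kj}$, and the boundary equation $(\lambda+h)\tilde{r}_{i0}-b_0\tilde{r}_{i1}=\delta_{i0}$ together with $b_0\phi^*_{i1}=\lambda\phi^*_{i0}$ $(i\geq1)$, $\phi^*_{00}=1/\lambda$, $\phi^*_{0j}=0$ $(j\geq1)$ and honesty of the $Q^*$-process then yields \eqref{bbb 3.4}, \eqref{bbb 3.5} and the two equivalent forms of $\tilde{r}_{00}(\lambda)$ in \eqref{bbb 3.3}; I checked each of these identities against \eqref{phiijlamda} and they hold. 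The one point you should make fully explicit is the uniqueness step you appeal to when "superposition and uniqueness force the ansatz": it is the boundedness of $\tilde{Q}$ (not just of $Q^*$) that guarantees the $\tilde{Q}$-resolvent is the unique solution of the resolvent/forward equations, so that your verified candidate must coincide with $\tilde{R}(\lambda)$. With that noted, your argument is a self-contained, elementary substitute for the citation-based proof: the paper's route buys brevity and works in far greater generality (single-row perturbations of possibly unbounded $q$-matrices), while yours makes the theorem verifiable within the paper itself at the cost of a page of bookkeeping.
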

\begin{remark}\label{remark 3.1}
The Laplace transforms $(\tilde{r}_{ij}(\lambda);\ i,j\geq0)$ of the
transition probability $(\tilde{p}_{ij}(t),$\\$\ i,j\geq0)$ can be
obtained. Indeed, by Theorem~\ref{the resolvent of transition
probability} and Remark~\ref{remark 2.1}, we can get
$(\phi_{ij}^*(\lambda);\ i,j\geq1)$. Then by \eqref{bbb 3.3} and
\eqref{bbb 3.4}, we  have
\begin{equation}\label{tildetilder00lambdar00lambda}
\tilde{r}_{00}(\lambda)=\bigg[\lambda+h-H(u(\lambda))+\sum_{k=1}^{c-1}u(\lambda)^{k-1}(c-k)
b_0(1-u(\lambda))\sum_{i=1}^\infty
h_i\phi_{ik}^*(\lambda)\bigg]^{-1}
\end{equation}
and
\begin{equation}\label{tildetilderi0lambdari0lambda}
\tilde{r}_{i0}(\lambda)=\frac{b_0\phi_{i1}^*(\lambda)}{\lambda+h-H(u(\lambda))+\sum_{k=1}^{c-1}
u(\lambda)^{k-1}(c-k)b_0(1-u(\lambda))\sum_{i=1}^\infty
h_i\phi_{ik}^*(\lambda)},\ \ i\geq 1.
\end{equation}
Substituting $\tilde{r}_{00}(\lambda)$ and $(\phi_{ij}^*(\lambda);\
i,j\geq1)$ into  \eqref{bbb 3.5}, we can obtain
$\tilde{r}_{0j}(\lambda)\ (j\geq1)$. Finally, making use of
$\tilde{r}_{i0}(\lambda)\ (i\geq1)$ and $(\phi_{ij}^*(\lambda);\
i,j\geq1)$ in \eqref{bbb 3.6}, we can get
$(\tilde{r}_{ij}(\lambda);\ i,j\geq1)$.
\end{remark}
\par
We now consider the recurrence properties of the modified $M^X/M/c$
queue determined by $q$-matrix $\tilde{Q}$.
\par
\begin{theorem}\label{abcthth 3.2}
For the modified $M^X/M/c$ queueing process with $q$-matrix
$\tilde{Q}$, we have
\par
\emph{(i)} the process is recurrent if and only if
$B_c^\prime(1)\leq0$,
\par
\emph{(ii)} the process is positive recurrent if and only if
$B_c^\prime(1)<0$ and $\mu_1<\infty$, where $\mu_1$ is defined by
\eqref{bbb 3.2}.
\end{theorem}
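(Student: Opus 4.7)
The plan is to exploit the standard Tauberian characterization of (positive) recurrence for a continuous-time Markov chain through its resolvent at $0$, combined with the explicit expression in Theorem~\ref{abcthth 3.1}. The process $\{\tilde X_t\}$ is recurrent iff $\lim_{\lambda\to 0^+}\tilde r_{00}(\lambda)=\infty$ and, given recurrence, positive recurrent iff $\lim_{\lambda\to 0^+}\lambda\tilde r_{00}(\lambda)>0$. Under the standing assumptions $b_0>0$, $\sum_{j\geq 2}b_j>0$ and $h>0$ the process is irreducible on the communicating class of $0$, so these criteria are genuinely informative.

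The first step is to rewrite \eqref{bbb 3.3} in a more transparent form. Since $Q^*$ is bounded and has $0$ as its only absorbing state, the associated Feller minimal process is honest on $\mathbf E$, so $\sum_{j\geq 0}\phi_{ij}^*(\lambda)=1/\lambda$ for every $i\geq 0$. Substituting yields
\begin{equation*}
\tilde r_{00}(\lambda)^{-1}=\lambda+\lambda\sum_{i=1}^\infty h_i\Bigl(\tfrac{1}{\lambda}-\phi_{i0}^*(\lambda)\Bigr)=\lambda+h-\lambda\sum_{i=1}^\infty h_i\phi_{i0}^*(\lambda).
\end{equation*}
By Lemma~\ref{abcthth 2.2}, $\lambda\phi_{i0}^*(\lambda)$ is the Laplace transform of the distribution of $\tau_0^*$ starting from $i$; in particular it is nonincreasing in $\lambda$, and $\lambda\phi_{i0}^*(\lambda)\uparrow e_i^*$ as $\lambda\downarrow 0^+$. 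Moreover, the elementary Abelian identity gives $(1-\lambda\phi_{i0}^*(\lambda))/\lambda\uparrow E(\tau_0^*\mid X_0^*=i)$.

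For (i), monotone convergence in the sum yields $\tilde r_{00}(\lambda)^{-1}\to h-\sum_{i\geq 1}h_i e_i^*$. Theorem~\ref{the extinction probability}(i) says that $B_c'(1)\leq 0$ forces $e_i^*\equiv 1$, hence the limit is zero and the process is recurrent. If instead $B_c'(1)>0$, then by Theorem~\ref{the extinction probability}(ii) we have $e_i^*<1$ for every $i\geq 1$, whence $h-\sum h_i e_i^*>0$ strictly (since $h>0$), $\tilde r_{00}(\lambda)$ remains bounded as $\lambda\downarrow 0^+$, and the process is transient.

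For (ii), assume $B_c'(1)\leq 0$ (necessary by (i)). Writing
\begin{equation*}
\lambda\tilde r_{00}(\lambda)=\Biggl[1+\sum_{i=1}^\infty h_i\,\frac{1-\lambda\phi_{i0}^*(\lambda)}{\lambda}\Biggr]^{-1}\longrightarrow\frac{1}{1+hB},
\end{equation*}
where $B$ is the mean busy period of Corollary~\ref{busy period}, the chain is positive recurrent iff $B<\infty$. If $B_c'(1)=0$ then Theorem~\ref{the extinction probability}(iii) already gives $E(\tau_0^*\mid X_0^*=i)=\infty$, so $B=\infty$. When $B_c'(1)<0$, Corollary~\ref{busy period} reduces the question to the finiteness of $\sum_k h_k[k+\sum_{i=1}^{c-1}m_i^*(k)(c-i)b_0]$, whose leading piece is $\mu_1$, making $\mu_1<\infty$ necessary. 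The main technical obstacle is to show that $\mu_1<\infty$ is also sufficient, i.e., to verify the estimate $\sum_{i=1}^{c-1}m_i^*(k)=O(k)$ as $k\to\infty$. This I would extract from the linear system \eqref{eklinearequations}: since the $Q^*$-process decreases by single steps only, the mean descent time from $k$ to $c-1$ is linear in $k$ when $B_c'(1)<0$ (essentially the single-server case of Theorem~\ref{the extinction probability}(iii)), after which the $m_i^*(k)$ for $1\leq i\leq c-1$ are uniformly bounded in $k$. Once this linear bound is in hand, $\sum_k h_k\sum_i m_i^*(k)(c-i)b_0$ is dominated by a constant multiple of $\mu_1$, giving $B<\infty\iff\mu_1<\infty$ and completing (ii).
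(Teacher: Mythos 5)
Your proposal is correct and takes essentially the same route as the paper: irreducibility plus the $\lambda\to0^+$ behaviour of $\tilde r_{00}(\lambda)$ from \eqref{bbb 3.3}, reduced via honesty of the $Q^*$-process and \eqref{bbb 2.14} to the extinction probabilities and mean extinction times supplied by Lemma~\ref{abclele 2.3} and Theorem~\ref{the extinction probability}. The only real difference is presentational: you route part (ii) through the mean busy period and make explicit the uniform-in-$k$ control of $m_i^*(k)$ (which the skip-free-downward structure indeed yields, even in the stronger form $m_i^*(k)=m_i^*(i)$ for $k\geq i$ when $B_c^\prime(1)<0$), a step the paper passes over with its bare appeal to \eqref{bbb 2.5}, \eqref{bbb 2.6}, \eqref{bbb 2.14} and \eqref{aa 2.4}.
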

\begin{proof}
(i) Since the process is irreducible, it is recurrent if and only if
$\lim_{\lambda\rightarrow0^+}\tilde{r}_{00}(\lambda)=\infty$, and so
by \eqref{bbb 3.3} if and only if
\begin{equation}\label{bbb 3.7}
\lim_{\lambda\rightarrow0^+}\sum_{i=1}^\infty h_i\sum_{j=1}^\infty
\lambda\phi_{ij}^*(\lambda)=0.
\end{equation}
 So from \eqref{bbb 2.14} we see that \eqref{bbb 3.7} holds true if and only if
\begin{equation*}
\lim_{\lambda\rightarrow0^+}\Big(u(\lambda)^i-\sum_{k=1}^{c-1}\phi_{ik}^*(\lambda)u(\lambda)^{k-1}
(c-k)b_0(1-u(\lambda))\Big)=1\ \ \mbox{for all}\ \ i\geq1,
\end{equation*}
which, by \eqref{bbb 2.19}, is equivalent to $B_c^\prime(1)\leq0$.
\par
(ii) Again using irreducibility and \eqref{bbb 3.3}, the process is
positive recurrent if and only if
 $\lim_{\lambda\rightarrow0^+}\lambda \tilde{r}_{00}(\lambda)>0$, i.e.,
$\lim_{\lambda\rightarrow0^+}\sum_{i=1}^\infty h_i\sum_{j=1}^\infty
\phi_{ij}^*(\lambda)<\infty$. Comparison with \eqref{bbb 2.5},
\eqref{bbb 2.6}, \eqref{bbb 2.14} and \eqref{aa 2.4} then shows that
the process is positive recurrent if and only if $B_c^\prime(1)<0$
and $\mu_1=\sum_{i=1}^\infty ih_i<\infty$. The proof is complete.
\end{proof}
\par
Having determined conditions for our modified $M^X/M/c$ queue to be
positive recurrent, we are now in a position to determine the
equilibrium distribution by giving the generating function
$\tilde{\Pi}(s):=\sum_{j=0}^\infty\tilde{\pi}_js^j.$
\begin{theorem}\label{abcthth 3.3}
Suppose theat $B_c^\prime(1)<0$ and $\mu_1<\infty$. The equilibrium
generating function $\tilde{\Pi}(s)$ takes the form
\begin{equation}\label{bbb 3.10}
\tilde{\Pi}(s)=\tilde{\pi}_0\Big[1+\frac{s(h-H(s))}{B_c(s)}\Big]+\frac{1}{B_c(s)}\sum_{k=1}^{c-1}\tilde{\pi}_ks^k(c-k)b_0(1-s),
\end{equation}
where
\begin{equation}\label{bbb 3.11}
\begin{cases}
\tilde{\pi}_0=-B_c^\prime(1)\Big[-B_c^\prime(1)+\mu_1+\sum_{k=1}^{c-1}r_k(c-k)b_0\Big]^{-1},\\
\tilde{\pi}_k=\tilde{\pi}_0r_k\ \ (k\geq1),
\end{cases}
\end{equation}
and $r_k:=\sum_{i=1}^\infty h_im_k^*(i)\ (k\geq1)$ satisfies
\begin{equation}\label{rrklinearequations}
\begin{cases}
b_0r_1=h,\\
b_1r_1+2b_0r_2=-h_1,\\
\ \ \ \ \cdots\\
\sum_{i=1}^{c-1}b_{c-i+1}r_i+[b_1-(c-1)b_0]r_c+cb_0r_{c+1}=-h_c,\\
\sum_{i=1}^{j-1}b_{j-i+1}r_i+[b_1-(c-1)b_0]r_j+cb_0r_{j+1}=-h_{j}\ \
\ (j\geq c+1).
\end{cases}
\end{equation}
\end{theorem}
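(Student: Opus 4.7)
The strategy has two parts: extract a functional equation for $\tilde\Pi(s)$ from the balance equations $\tilde\pi\tilde Q=0$, and use the resolvent identity \eqref{bbb 3.5} together with a Tauberian argument to identify the mixing coefficients $\tilde\pi_k$ and the normalizing constant $\tilde\pi_0$.

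The key computational observation is that, from \eqref{bbb 1.2} and the definition of $B_i(s)$,
\begin{equation*}
\sum_{j=0}^{\infty} q_{ij}^{*}\, s^{j} = s^{i-1}\,B_{\min(i,c)}(s), \qquad i \geq 1,
\end{equation*}
while \eqref{qsdefinition} gives $\sum_{j} q_{0j}\,s^{j}=H(s)-h$. Since the chain is positive recurrent by Theorem~\ref{abcthth 3.2}, the interchange of summations is legitimate; multiplying $\sum_i \tilde\pi_i q_{ij}=0$ by $s^{j}$ and summing over $j\ge 0$ produces
\begin{equation*}
\tilde\pi_0\bigl(H(s)-h\bigr)+\sum_{i=1}^{c-1}\tilde\pi_i s^{i-1}\bigl[B_i(s)-B_c(s)\bigr]+\frac{B_c(s)}{s}\bigl(\tilde\Pi(s)-\tilde\pi_0\bigr)=0.
\end{equation*}
Using the algebraic identity $B_i(s)-B_c(s)=-(c-i)b_0(1-s)$ and solving for $\tilde\Pi(s)$ immediately reproduces formula \eqref{bbb 3.10}.

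To pin down the constants, I would apply the Tauberian theorem to \eqref{bbb 3.5}. Since $\lim_{\lambda\to 0^+}\lambda\tilde r_{00}(\lambda)=\tilde\pi_0$ and, by monotone convergence combined with Lemma~\ref{abclele 2.5}, $\lim_{\lambda\to 0^+}\sum_{i=1}^{\infty}h_i\phi_{ik}^{*}(\lambda)=\sum_{i=1}^{\infty}h_i m_k^{*}(i)=r_k$, we obtain $\tilde\pi_k=\tilde\pi_0 r_k$ for every $k\ge 1$. The finiteness of $r_k$ is guaranteed because $m_k^{*}(i)\le E(\tau^{*}_0\mid X^{*}_0=i)\le \mathrm{const}\cdot i$ by Theorem~\ref{the extinction probability}(iii), and $\mu_1<\infty$. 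The linear system \eqref{rrklinearequations} for $(r_k)$ then follows either by dividing the balance equations by $\tilde\pi_0$, or equivalently by weighting the forward system \eqref{phiijlamda} by $h_i$, summing over $i\ge 1$, and letting $\lambda\to 0^+$.

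Finally, $\tilde\pi_0$ is determined by the normalization $\tilde\Pi(1)=1$. Because $B_c(1)=0$ and each numerator in \eqref{bbb 3.10} vanishes at $s=1$, both summands take the indeterminate form $0/0$; applying l'H\^opital's rule gives
\begin{equation*}
\lim_{s\to 1}\frac{s(h-H(s))}{B_c(s)}=\frac{\mu_1}{-B_c'(1)},\qquad \lim_{s\to 1}\frac{s^{k}(c-k)b_0(1-s)}{B_c(s)}=\frac{(c-k)b_0}{-B_c'(1)}.
\end{equation*}
Substituting $\tilde\pi_k=\tilde\pi_0 r_k$ and solving $1=\tilde\pi_0\bigl[-B_c'(1)+\mu_1+\sum_{k=1}^{c-1}r_k(c-k)b_0\bigr]\bigl/\bigl(-B_c'(1)\bigr)$ yields the expression for $\tilde\pi_0$ in \eqref{bbb 3.11}. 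The main technical nuisance is simply the bookkeeping around the $\min(i,c)$ split in the drift coefficients, but packaging it via $B_{\min(i,c)}(s)$ makes the computation routine; the only genuine analytical step is the l'H\^opital evaluation at the removable singularity $s=1$.
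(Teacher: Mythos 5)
Your proposal is correct, but it reaches \eqref{bbb 3.10} and $\tilde{\pi}_0$ by a genuinely different route than the paper. The paper never writes down the balance equations: it obtains the generating function by letting $\lambda\rightarrow0^+$ in the resolvent identity \eqref{bbb 3.5}, using the explicit form $L_i(\lambda,s)$ of Theorem~\ref{abthth 2.1} together with \eqref{bbb 2.14}, and it obtains $\tilde{\pi}_0=\lim_{\lambda\rightarrow0^+}\lambda\tilde{r}_{00}(\lambda)$ by paralleling the proof of Theorem~\ref{abcthth 3.2}, i.e.\ via \eqref{bbb 3.3} and the Tauberian limits \eqref{bbb 2.5}--\eqref{bbb 2.6} of Lemma~\ref{abclele 2.3}. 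You instead derive \eqref{bbb 3.10} directly from $\tilde{\pi}\tilde{Q}=0$ through the clean identity $\sum_j q^*_{ij}s^j=s^{i-1}B_{\min(i,c)}(s)$ and $B_i(s)-B_c(s)=-(c-i)b_0(1-s)$, and you fix $\tilde{\pi}_0$ by the normalization $\tilde{\Pi}(1)=1$ with l'H\^opital at the removable singularity $s=1$ (both limits you compute are right, since $B_c(1)=0$, $B_c(s)>0$ on $[0,1)$ when $B_c'(1)<0$, and Abel continuity of $\tilde{\Pi}$ at $s=1^-$ holds because positive recurrence makes $(\tilde{\pi}_j)$ a proper distribution). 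Your identification $\tilde{\pi}_k=\tilde{\pi}_0 r_k$ via \eqref{bbb 3.5}, monotone convergence and Lemma~\ref{abclele 2.5}, and your derivation of \eqref{rrklinearequations} by weighting \eqref{phiijlamda} with $h_i$ and letting $\lambda\rightarrow0^+$, coincide with the paper's steps. The trade-off: your balance-equation argument is more elementary and avoids the explicit resolvent generating function (you only need that the stationary distribution of the bounded, conservative $\tilde{Q}$ satisfies $\tilde{\pi}\tilde{Q}=0$ and the interchange of sums for $0\le s<1$, both routine here), whereas the paper's resolvent route yields $\tilde{\pi}_0$ without invoking normalization and keeps everything inside the Laplace-transform machinery already built in Section~2. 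Two minor points worth tightening: the bound $m^*_k(i)\le E(\tau^*_0|X^*_0=i)\le \mathrm{const}\cdot i$ needs the extra observation that the $m^*_j(i)$ appearing in Theorem~\ref{the extinction probability}(iii) are bounded in $i$ (alternatively, $m^*_k(i)\le m^*_k(k)<\infty$ because downward jumps are unit, which gives finiteness of $r_k$ even more directly); and your use of $\tilde{\pi}_k=\lim_{\lambda\to0^+}\lambda\tilde{r}_{0k}(\lambda)$ should be stated as the standard ergodic/Tauberian fact the paper also relies on. Neither affects correctness.
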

\begin{proof}
Noting that
$\tilde{\pi}_j=\lim\limits_{\lambda\rightarrow0^+}\lambda
\tilde{r}_{0j}(\lambda)$ for all $j\geq0$, let us first consider
$j=0$. Paralleling the proof of Theorem~\ref{abcthth 3.2} we see
that
\begin{equation*}
\tilde{\pi}_0=\lim_{\lambda\rightarrow0^+}\lambda
\tilde{r}_{00}(\lambda)=-B_c^\prime(1)\Big[-B_c^\prime(1)+\mu_1+\sum_{k=1}^{c-1}r_k(c-k)b_0\Big]^{-1},
\end{equation*}
whilst for $j\geq1$, it follows from \eqref{bbb 3.5} that
\begin{equation*}
\tilde{\pi}_j=\lim_{\lambda\rightarrow0^+}\lambda
\tilde{r}_{0j}(\lambda)=\tilde{\pi}_0\sum_{i=1}^\infty
h_i\int_0^\infty p_{ij}^*(t)dt=\tilde{\pi}_0r_j,
\end{equation*}
whence by \eqref{aa 2.4} and letting $\lambda\rightarrow0^+$ in
every equation of \eqref{phiijlamda}, we can immediately obtain
\eqref{rrklinearequations}. Thus, on applying \eqref{bbb 2.13} and
\eqref{bbb 2.14}, we have
\begin{align*}
\tilde{\Pi}(s)&=\tilde{\pi}_0\Big[1+\lim_{\lambda\rightarrow0^+}\sum_{i=1}^\infty
h_i\sum_{j=1}^\infty
\phi_{ij}^*(\lambda)s^j\Big]\\
&=\tilde{\pi}_0\Big[1+\frac{s(h-H(s))}{B_c(s)}\Big]+\frac{1}{B_c(s)}\sum_{k=1}^{c-1}\tilde{\pi}_ks^k(c-k)b_0(1-s).
\end{align*}
The proof is complete.
\end{proof}
\par
\begin{remark}
Theorem~\ref{abcthth 3.3} gives the generating function of
equilibrium distribution for $M^X/M/c$ queue with resurrection. If
$h_1=b_2$, $h_j=b_{j+1}=0\ (j\geq2)$ and let $\rho=\frac{b_2}{b_0}$,
then we recover the ordinary $M/M/c$ queue, in this case, \eqref{bbb
3.10} reduces to
\begin{equation*}
\tilde{\Pi}(s)=\frac{b_0}{cb_0-b_2s}\sum_{k=0}^{c-1}\tilde{\pi}_ks^k(c-k),
\end{equation*}
where
\begin{align}
\tilde{\pi}_0=\Big[\sum_{k=0}^c\frac{\rho^k}{k!}+\frac{\rho^{c+1}}{c!(c-\rho)}\Big]^{-1}\hspace{.5cm}
\mathrm{and} \hspace{.5cm}& \tilde{\pi}_k=
\begin{cases}
\frac{\rho^k}{k!}\tilde{\pi}_0& \mbox{$k=1,2,\ldots,c-1$},\\
\frac{\rho^k}{c^{k-c}c!}\tilde{\pi}_0&  \mbox{$k\geq c$}.\nonumber
\end{cases}
\end{align}
If $c=1$, then we recover the corresponding result in Chen and
Renshaw \cite{Ch2004}.
\end{remark}
\par
From \eqref{bbb 3.10}, we can obtain the following corollary which
illustrates other important queueing features.
\begin{corollary}
The equilibrium queue size, $N$, has expectation
\begin{equation}\label{EN}
E(N)=\tilde{\pi}_0\Big[\frac{B_c^{\prime\prime}(1)\mu_1}{2(B_c^\prime(1))^2}-\frac{2\mu_1
+H^{\prime\prime}(1)}{2B_c^\prime(1)}\Big]
+\sum_{k=1}^{c-1}\tilde{\pi}_k(c-k)b_0\Big[\frac{B_c^{\prime\prime}(1)}{2(B_c^\prime(1))^2}-\frac{k}{B_c^\prime(1)}\Big]
\end{equation}
if both $B_c^{\prime\prime}(1)$ and $H^{\prime\prime}(1)$ are
finite, and $E(N)=\infty$ otherwise. The equilibrium waiting queue
size, $L_w$, has expectation{\small
\begin{equation}\label{ELw}
E(L_w)=\tilde{\pi}_0\Big[\frac{B_c^{\prime\prime}(1)\mu_1}{2(B_c^\prime(1))^2}-\frac{2\mu_1+H^{\prime\prime}(1)}{2B_c^\prime(1)}
+c\Big]
+\sum_{k=1}^{c-1}\tilde{\pi}_k(c-k)\Big[b_0\Big(\frac{B_c^{\prime\prime}(1)}{2(B_c^\prime(1))^2}-\frac{k}{B_c^\prime(1)}\Big)
+1\Big]-c
\end{equation}}
if both $B_c^{\prime\prime}(1)$ and $H^{\prime\prime}(1)$ are
finite, and $E(L_w)=\infty$ otherwise. Here  $(\tilde{\pi}_k;\ 0\leq
k\leq c-1)$ is given in Theorem~\emph{\ref{abcthth 3.3}}.
\end{corollary}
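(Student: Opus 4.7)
The plan is to compute $E(N)=\tilde\Pi'(1)$ by differentiating the expression for $\tilde\Pi(s)$ in Theorem~\ref{abcthth 3.3}, and then obtain $E(L_w)$ from the relation $L_w=(N-c)^+$. Both terms on the right-hand side of \eqref{bbb 3.10} are of $0/0$ type at $s=1$, since $B_c(1)=0$, $h-H(1)=0$ and $1-s$ vanishes at $s=1$, so the key technical ingredient is a careful Taylor expansion about $s=1$.

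First I would record the local expansions. Write $\epsilon=s-1$ and use
\begin{equation*}
B_c(s)=B_c'(1)\epsilon+\tfrac{1}{2}B_c''(1)\epsilon^2+O(\epsilon^3),\qquad
h-H(s)=-\mu_1\epsilon-\tfrac{1}{2}H''(1)\epsilon^2+O(\epsilon^3).
\end{equation*}
From these, the elementary expansion
\begin{equation*}
\frac{s-1}{B_c(s)}=\frac{1}{B_c'(1)}-\frac{B_c''(1)}{2\bigl(B_c'(1)\bigr)^2}\epsilon+O(\epsilon^2)
\end{equation*}
follows, and combining it with $s(h-H(s))=-\mu_1\epsilon-(\tfrac12H''(1)+\mu_1)\epsilon^2+O(\epsilon^3)$ gives
\begin{equation*}
\frac{s(h-H(s))}{B_c(s)}=-\frac{\mu_1}{B_c'(1)}+\left[\frac{B_c''(1)\mu_1}{2\bigl(B_c'(1)\bigr)^2}-\frac{2\mu_1+H''(1)}{2B_c'(1)}\right]\epsilon+O(\epsilon^2).
\end{equation*}
Differentiating at $s=1$ then produces exactly the coefficient of $\tilde\pi_0$ in \eqref{EN}. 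A parallel expansion of the second summand in \eqref{bbb 3.10}, writing $s^k(c-k)b_0(1-s)/B_c(s)=-(c-k)b_0 s^k(s-1)/B_c(s)$ and using $s^k=1+k\epsilon+O(\epsilon^2)$, yields the coefficient of $\tilde\pi_k$ in \eqref{EN}. Summing these contributions over $k=1,\dots,c-1$ completes the derivation of $E(N)$; when $B_c''(1)=\infty$ or $H''(1)=\infty$ the same expansions show $\tilde\Pi'(1)=\infty$, which gives the second half of the statement.

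For the waiting-queue expectation I would use the identity
\begin{equation*}
L_w=(N-c)^+=N-c+\sum_{j=0}^{c-1}(c-j)\mathbf{1}_{\{N=j\}},
\end{equation*}
so that
\begin{equation*}
E(L_w)=E(N)-c+c\tilde\pi_0+\sum_{k=1}^{c-1}(c-k)\tilde\pi_k.
\end{equation*}
Substituting the expression just obtained for $E(N)$ and regrouping the $+c\tilde\pi_0$ into the bracket multiplying $\tilde\pi_0$ and the $+(c-k)\tilde\pi_k$ terms into the brackets multiplying $\tilde\pi_k$ produces \eqref{ELw}. The infinite case is inherited from $E(N)=\infty$.

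The only delicate point is the bookkeeping of the Taylor expansions to second order, because the leading terms cancel and one must keep track of the $O(\epsilon)$ corrections in both numerator and denominator before reading off $\tilde\Pi'(1)$; once those expansions are in hand, the corollary follows by direct substitution and algebraic rearrangement.
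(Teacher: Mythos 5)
Your proposal is correct and follows exactly the route the paper intends: the paper states this corollary without proof as a direct consequence of the generating function in Theorem~\ref{abcthth 3.3}, and your second-order Taylor expansion of $\tilde{\Pi}(s)$ about $s=1$ (resolving the $0/0$ cancellations in both summands) together with the identity $E(L_w)=E(N)-c+c\tilde{\pi}_0+\sum_{k=1}^{c-1}(c-k)\tilde{\pi}_k$ reproduces \eqref{EN} and \eqref{ELw} precisely. I verified the expansion coefficients and the regrouping for $E(L_w)$; they check out.
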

\par
\setcounter{section}{4}
 \setcounter{equation}{0}
 \setcounter{theorem}{0}
 \setcounter{remark}{0}
 \setcounter{lemma}{0}
 \setcounter{corollary}{0}
\noindent {\large \bf 4. The $M^X/M/c$ queue  with resurrection and
catastrophes} \vspace{3mm}
\par
\par
In this section, we consider the general case that $\beta>0$, the
$q$-matrix $Q$ now takes the form \eqref{bbb 1.1}, i.e.,
$Q=Q^*+Q_s+Q_d$, where $Q_s$ and $Q_d$ are defined by
\eqref{qsdefinition} and \eqref{bbb 1.3}. Let $\{X_t;t\geq 0\}$
denote the $Q$-process, $P(t)=(p_{ij}(t);\ i,j\geq0)$ and
$R(\lambda)=(r_{ij}(\lambda);\ i,j\geq0)$ be the $Q$-function and
$Q$-resolvent, respectively. Noting that the properties of this
process are substantially different from that in the case $\beta=0$
considered in Section 3. Using the resolvent decomposition theorem
(see Chen and Renshaw \cite{Ch1990}), we have the following theorem.
\par
\begin{theorem}\label{abcthth 4.1}
For $R(\lambda)=(r_{ij}(\lambda);\ i,j\geq0)$, we have
\begin{align}\label{bbb 4.1}
r_{00}(\lambda)&=\Big[\lambda+\lambda\sum_{i=1}^\infty\sum_{j=1}^\infty
h_i\phi_{ij}^*(\lambda+\beta)\Big]^{-1},\\ \label{bbb 4.2}
r_{i0}(\lambda)&=r_{00}(\lambda)\Big[b_0\phi_{i1}^*(\lambda+\beta)
+\beta\sum_{k=1}^\infty\phi_{ik}^*(\lambda+\beta)\Big]\ \ \
(i\geq1), \\\label{bbb 4.3}
r_{0j}(\lambda)&=r_{00}(\lambda)\sum_{i=1}^\infty
h_i\phi_{ij}^*(\lambda+\beta)\ \ \ (j\geq1), \\\label{bbb 4.4}
r_{ij}(\lambda)&=\phi_{ij}^*(\lambda+\beta)+r_{i0}(\lambda)\sum_{k=1}^\infty
h_k\phi_{kj}^*(\lambda+\beta)\ \ \ (i,j\geq1),
\end{align}
where $\Phi^*(\lambda)=(\phi_{ij}^*(\lambda);\ i,j\geq0)$ is the
$Q^*$-resolvent given in Theorem~\emph{\ref{abthth 2.1}}.
\end{theorem}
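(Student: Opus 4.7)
The plan is to parallel the derivation of Theorem~\ref{abcthth 3.1}, treating the catastrophe as an independent $\mathrm{Exp}(\beta)$ killing superimposed on the $Q^*$-dynamics while the process sits in states $\geq 1$. Two random times drive the decomposition: the first hitting time $T_0$ of state $0$, and the first jump out of $0$. The reason $\phi^*$ will consistently be evaluated at $\lambda+\beta$ is this killing: for $i\geq 1$ and $j\geq 1$, the trajectory up to $T_0$ agrees with the $Q^*$-trajectory on the event that no catastrophe has fired, so $P_i(X_t=j,\,T_0>t)=e^{-\beta t}p^*_{ij}(t)$, whose Laplace transform in $t$ is $\phi^*_{ij}(\lambda+\beta)$.

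Next I would compute the Laplace transform of the first-passage density to $0$. The process can enter $0$ either by the natural transition $1\to 0$ (at rate $b_0$) or by a catastrophe from any state $k\geq 1$ (at rate $\beta$), so
\[
L_i(\lambda):=\int_0^\infty e^{-\lambda t}P_i(T_0\in dt)=b_0\phi^*_{i1}(\lambda+\beta)+\beta\sum_{k=1}^\infty \phi^*_{ik}(\lambda+\beta).
\]
Applying the strong Markov property at $T_0$ yields $r_{i0}(\lambda)=L_i(\lambda)\,r_{00}(\lambda)$, which is~\eqref{bbb 4.2}, together with $r_{ij}(\lambda)=\phi^*_{ij}(\lambda+\beta)+L_i(\lambda)\,r_{0j}(\lambda)$ for $i,j\geq 1$; the latter becomes~\eqref{bbb 4.4} once~\eqref{bbb 4.3} is in hand.

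To handle state $0$ I would invoke the strong Markov property at the first jump out of $0$, noting that the holding rate there is $h$ with jump law $(h_k/h)_{k\geq 1}$. This produces
\[
(\lambda+h)r_{00}(\lambda)=1+\sum_{k=1}^\infty h_k r_{k0}(\lambda),\qquad (\lambda+h)r_{0j}(\lambda)=\sum_{k=1}^\infty h_k r_{kj}(\lambda)\ (j\geq 1).
\]
Feeding $r_{k0}(\lambda)=L_k(\lambda)r_{00}(\lambda)$ into the first equation gives $r_{00}(\lambda)=[\lambda+h-\sum_k h_k L_k(\lambda)]^{-1}$, while substituting $r_{kj}(\lambda)=\phi^*_{kj}(\lambda+\beta)+L_k(\lambda)r_{0j}(\lambda)$ into the second and solving for $r_{0j}(\lambda)$ yields~\eqref{bbb 4.3}.

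The one genuinely delicate step is to recognise that the bracket $\lambda+h-\sum_k h_k L_k(\lambda)$ coincides with $\lambda+\lambda\sum_{i,j\geq 1}h_i\phi^*_{ij}(\lambda+\beta)$ as claimed in~\eqref{bbb 4.1}. This rests on the pointwise identity $1=b_0\phi^*_{k1}(\lambda+\beta)+(\lambda+\beta)\sum_{j\geq 1}\phi^*_{kj}(\lambda+\beta)$ for every $k\geq 1$, which I would derive by combining honesty $\sum_{j\geq 0}\phi^*_{kj}(\lambda+\beta)=1/(\lambda+\beta)$ with the forward-equation relation $(\lambda+\beta)\phi^*_{k0}(\lambda+\beta)=b_0\phi^*_{k1}(\lambda+\beta)$ (valid because $0$ is absorbing under $Q^*$). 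Weighting by $h_k$ and summing then collapses $h-\sum_k h_k L_k(\lambda)$ into $\lambda\sum_{i,j\geq 1}h_i\phi^*_{ij}(\lambda+\beta)$, giving~\eqref{bbb 4.1}; everything else is a catastrophe-aware rerun of the argument for Theorem~\ref{abcthth 3.1}, which is also what one obtains by invoking the resolvent decomposition theorem of Chen and Renshaw~\cite{Ch1990}.
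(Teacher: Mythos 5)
Your argument is correct, but it takes a genuinely different route from the paper: the paper offers no computation at all for this theorem, simply invoking the resolvent decomposition theorem of Chen and Renshaw \cite{Ch1990} in parallel with Theorem~\ref{abcthth 3.1} (and with Theorem 3.1 of \cite{Ch2004}), whereas you rebuild the decomposition probabilistically. Your two ingredients --- the identification $P_i(X_t=j,\,T_0>t)=e^{-\beta t}p^*_{ij}(t)$ for $i,j\geq1$ (valid because on $\{T_0>t\}$ the catastrophe acts as an independent $\mathrm{Exp}(\beta)$ clock superimposed on the $Q^*$-motion, and $0$ is absorbing under $Q^*$), and the strong Markov property applied at $T_0$ and at the first jump out of state $0$ (holding rate $h$, jump law $h_k/h$) --- give exactly \eqref{bbb 4.2}--\eqref{bbb 4.4}; and your reduction of the bracket $\lambda+h-\sum_k h_kL_k(\lambda)$ to the form in \eqref{bbb 4.1} rests on the identity $1=b_0\phi^*_{k1}(\lambda+\beta)+(\lambda+\beta)\sum_{j\geq1}\phi^*_{kj}(\lambda+\beta)$, which is legitimate: honesty of the Feller minimal $Q^*$-process follows from $Q^*$ being bounded and conservative, and $(\lambda+\beta)\phi^*_{k0}(\lambda+\beta)=b_0\phi^*_{k1}(\lambda+\beta)$ for $k\geq1$ is the first forward equation in \eqref{phiijlamda} --- indeed the paper itself uses this same pair of facts immediately after the theorem, cf.\ \eqref{sumk=1toinftyphiik*}. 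What your route buys is a self-contained proof that also explains transparently why every $\phi^*$ is evaluated at $\lambda+\beta$; what the paper's route buys is brevity, since the resolvent decomposition theorem packages the same first-passage/first-entrance decomposition, together with the attendant algebra and interchange-of-summation details (which in your version are harmless anyway, all terms being nonnegative, so Tonelli applies), into a single citation.
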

\par
Replacing $\lambda$ with $\lambda+\beta$ in \eqref{bbb 2.14},
 we have{\small
\begin{equation}\label{sumk=1toinftyphiik*}
\sum_{k=1}^\infty\phi_{ik}^*(\lambda+\beta)
=\frac{1-u(\lambda+\beta)^i+\sum_{k=1}^{c-1}\phi_{ik}^*(\lambda+\beta)u(\lambda+\beta)^{k-1}(c-k)
b_0(1-u(\lambda+\beta))}{\lambda+\beta}.
\end{equation}}
Since
\begin{equation*}
b_0\phi_{i1}^*(\lambda+\beta)=(\lambda+\beta)\phi_{i0}^*(\lambda+\beta)=u(\lambda+\beta)^i-M_i(\lambda),
\end{equation*}
where
\begin{equation*}
M_i(\lambda)=\sum_{k=1}^{c-1}\phi_{ik}^*(\lambda+\beta)u(\lambda+\beta)^{k-1}(c-k)b_0(1-u(\lambda+\beta)).
\end{equation*}
It follows from \eqref{bbb 4.2} that
\begin{equation}\label{bbb 4.5}
r_{i0}(\lambda) =r_{00}(\lambda)\frac{\lambda
u(\lambda+\beta)^i-\lambda M_i(\lambda)+\beta}{\lambda+\beta}.
\end{equation}
Moreover, \eqref{bbb 4.1} can be rewritten as
\begin{align}\label{bbb 4.6}
r_{00}(\lambda)
=& \Big[\lambda+\frac{\lambda}{\lambda+\beta}\Big(H(1)-H(u(\lambda+\beta))\nonumber\\
&+\sum_{k=1}^{c-1}u(\lambda+\beta)^{k-1}(c-k)b_0(1-u(\lambda+\beta))\sum_{i=1}^\infty
h_i\phi_{ik}^*(\lambda+\beta)\Big)\Big]^{-1}.
\end{align}
\par
 Paralleling Section 2, define the hitting
time $\tau_0=\inf\{t>0;\ X_t=0\}$ with $\tau_0=\infty$ if $X_t>0$
for all $t>0$, and let $e_k=P(\tau_0<\infty|X_0=k)$ and
$w_k(t)=P(\tau_0\leq t|X_0=k)$ denote the hitting probability and
distribution function of $\tau_0$, starting from $X_0=k$,
respectively.
\begin{theorem}\label{abcthth 4.2}
If  $h=0$ and $\beta>0$. Then for any $k\geq1,$
\begin{align}\label{bbb 4.7}
&\int_0^\infty e^{-\lambda t}P(\tau_0\leq
t|X_0=k)\emph{d}t\nonumber \\
&=
\frac{1}{\lambda+\beta}\Big(\frac{\beta}{\lambda}+u(\lambda+\beta)^k-
\sum_{i=1}^{c-1}\phi_{ki}^*(\lambda+\beta)u(\lambda+\beta)^{i-1}(c-i)b_0(1-u(\lambda+\beta))\Big),
\end{align}
where $u(\lambda)$ is the unique root of $U_\lambda(s)=0$ on $[0,1]$
and $(\phi_{ki}^*(\lambda);\ k\geq1,\ 1\leq i\leq c-1)$ is given by
Theorem~\emph{\ref{the resolvent of transition probability}}.
Moreover, $e_k=1\ (k=1,2,\ldots)$ and the mean extinction time is
finite and given by
\begin{equation}\label{bbb 4.9}
E(\tau_0|X_0=k)=\frac{1}{\beta}\Big[1-u(\beta)^k+\sum_{i=1}^{c-1}\phi_{ki}^*(\beta)u(\beta)^{i-1}(c-i)
b_0(1-u(\beta))\Big].
\end{equation}
\end{theorem}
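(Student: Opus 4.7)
The plan is to read off the hitting-time Laplace transform directly from the $Q$-resolvent supplied by Theorem~\ref{abcthth 4.1}, exploiting the fact that under $h=0$ the state $0$ is absorbing.

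First I would observe that when $h=0$ we have $h_j=0$ for every $j\ge 1$, so $Q_s\equiv 0$ and the only transitions out of $0$ in the $q$-matrix $Q=Q^*+Q_s+Q_d$ are zero (note that $Q_d$ acts only on states $i\ge 1$). Hence $0$ is an absorbing state of the $Q$-process $\{X_t;t\ge 0\}$, so that $\{X_t=0\}=\{\tau_0\le t\}$ for every $t\ge 0$ and $X_0=k\ge 1$. This identification gives
\begin{equation*}
\int_0^\infty e^{-\lambda t}P(\tau_0\le t\mid X_0=k)\,\mathrm{d}t=r_{k0}(\lambda),
\end{equation*}
which reduces the first assertion to evaluating $r_{k0}(\lambda)$.

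Next I would simplify \eqref{bbb 4.6} under $h=0$. Since $H\equiv 0$ makes both $H(1)-H(u(\lambda+\beta))$ and $\sum_{i\ge 1}h_i\phi_{ik}^*(\lambda+\beta)$ vanish, the bracket in \eqref{bbb 4.6} collapses to $\lambda$ alone, yielding $r_{00}(\lambda)=1/\lambda$. Substituting into \eqref{bbb 4.5} I get
\begin{equation*}
r_{k0}(\lambda)=\frac{1}{\lambda+\beta}\Bigl(\frac{\beta}{\lambda}+u(\lambda+\beta)^k-M_k(\lambda)\Bigr),
\end{equation*}
and unpacking the definition of $M_k(\lambda)$ produces exactly \eqref{bbb 4.7}.

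For the remaining claims I would appeal to the Tauberian theorem, just as in the proof of Theorem~\ref{the extinction probability}. Multiplying \eqref{bbb 4.7} by $\lambda$ and letting $\lambda\to 0^+$, the term $\beta/(\lambda+\beta)$ tends to $1$ while the other contribution vanishes (since $u(\lambda+\beta)\to u(\beta)\in[0,1]$ and each $\phi_{ki}^*(\lambda+\beta)\to\phi_{ki}^*(\beta)$ is finite by Lemma~\ref{abclele 2.5}), so $e_k=\lim_{\lambda\to 0^+}\lambda r_{k0}(\lambda)=1$. For the mean I compute
\begin{equation*}
\frac{1-\lambda r_{k0}(\lambda)}{\lambda}=\frac{1}{\lambda+\beta}\bigl(1-u(\lambda+\beta)^k+M_k(\lambda)\bigr),
\end{equation*}
whose limit as $\lambda\to 0^+$ is \eqref{bbb 4.9}. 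The only subtle step is verifying the continuity in $\lambda$ of the quantities $u(\lambda+\beta)$ and $\phi_{ki}^*(\lambda+\beta)$ at $\lambda=0$; this is immediate from Lemma~\ref{abclele 2.3}(i) and from the fact that $\beta>0$ keeps us away from the singular point $\lambda=0$ of the $Q^*$-resolvent. No further obstacle is expected, since the hardest analytic content (the explicit form of the $Q^*$-resolvent and the properties of $u(\cdot)$) is already in hand.
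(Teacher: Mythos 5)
Your proposal is correct and follows essentially the same route as the paper: it identifies the Laplace transform of $P(\tau_0\le t\mid X_0=k)$ with $r_{k0}(\lambda)$ (using that $0$ is absorbing when $h=0$), notes that \eqref{bbb 4.6} collapses to $r_{00}(\lambda)=1/\lambda$ so that \eqref{bbb 4.7} is just \eqref{bbb 4.5}, and then applies the Tauberian theorem to get $e_k=1$ and \eqref{bbb 4.9}. You merely spell out two steps the paper leaves implicit (the absorbing-state identification and the computation of $(1-\lambda r_{k0}(\lambda))/\lambda$), which is fine.
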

\begin{proof}
 \eqref{bbb 4.7} is just \eqref{bbb 4.5},
since $r_{00}(\lambda)=\frac{1}{\lambda}$ in the current case.
Hence,
\begin{equation*}
e_k=\lim_{\lambda\rightarrow0^+}\frac{\beta+\lambda
u(\lambda+\beta)^k-\lambda
\sum_{i=1}^{c-1}\phi_{ki}^*(\lambda+\beta)u(\lambda+\beta)^{i-1}(c-i)b_0(1-u(\lambda+\beta))}{\lambda+\beta}=1.
\end{equation*}
By using \eqref{bbb 4.7} and the Tauberian theorem, we can obtain
\eqref{bbb 4.9}. The proof is complete.
\end{proof}
\par
We now consider the case $h>0$. We first give the following
important lemma which follows from Theorem~\ref{the resolvent of
transition probability}.
\begin{lemma}\label{l_k1leqkleqc-1}
For $(p_{ij}^*(t);\ i,j\geq0)$ and $(\phi_{ij}^*(\lambda);\
i,j\geq0)$ given in Section \emph{2}, denote
$L_j(\lambda)=\sum_{i=1}^\infty h_i\phi_{ij}^*(\lambda)$ $\
(j\geq0)$. Then $(L_j(\lambda);\ 0\leq j\leq c-1)$ is the unique
solution of the following linear equations
\begin{equation*}
\begin{cases}
-\lambda
L_0(\lambda)-\sum_{k=1}^{c-1}u(\lambda)^{k-1}[B_c(u(\lambda))-B_k(u(\lambda))]L_k(\lambda)=-H(u(\lambda)),\\
-\lambda L_0(\lambda)+b_0L_1(\lambda)=0,\\
(b_1-\lambda)L_1(\lambda)+2b_0L_2(\lambda)=-h_1,\\
\ \ \ \ \ \ \cdots\\
\sum_{k=1}^{j-1}L_k(\lambda)b_{j-k+1}+[b_1-(j-1)b_0-\lambda]L_j(\lambda)+(j+1)b_0L_{j+1}(\lambda)=-h_j,\\
\ \ \ \ \ \ \cdots\\
\sum_{k=1}^{c-3}L_k(\lambda)b_{c-k-1}+[b_1-(c-3)b_0-\lambda]L_{c-2}(\lambda)+(c-1)b_0L_{c-1}(\lambda)=-h_{c-2},
\end{cases}
\end{equation*}
where $u(\lambda)(\lambda>0)$ is the unique root of $U_\lambda(s)=0$
on $[0,1]$. Moreover, all the $(L_j(\lambda);\ j\geq 0)$ can be
obtained.
\end{lemma}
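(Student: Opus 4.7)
The plan is to obtain the claimed linear system for $(L_j(\lambda);\ 0\le j\le c-1)$ by applying the operator $\sum_{i=1}^\infty h_i\,\cdot$ to the system for $(\phi_{ij}^*(\lambda);\ 0\le j\le c-1)$ established in Theorem~\ref{the resolvent of transition probability}. Concretely, for each fixed $i\ge 1$ I take the $c$ equations of \eqref{lamdaphii0starlamda}, multiply each by $h_i$, and sum over $i\ge 1$.

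Before summing, I would justify the interchange of summations: since $\sum_{i=1}^\infty h_i=h<\infty$ and each resolvent entry satisfies $\phi_{ij}^*(\lambda)\le 1/\lambda$, every series $L_j(\lambda)=\sum_{i=1}^\infty h_i\phi_{ij}^*(\lambda)$ converges absolutely with $L_j(\lambda)\le h/\lambda$, and Fubini/dominated convergence permits freely interchanging the sum over $i$ with the finite linear combinations appearing on each left-hand side of \eqref{lamdaphii0starlamda}. This step is routine but is the only analytic content; once it is in place the rest is algebra.

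After the interchange, the left-hand sides transform termwise into the corresponding linear combinations of $L_k(\lambda)$ with identical coefficients (those coefficients do not depend on $i$). On the right-hand sides the transformation reads: the first equation's forcing $-u(\lambda)^i$ turns into $-\sum_{i=1}^\infty h_i u(\lambda)^i=-H(u(\lambda))$; the second equation's forcing $-\delta_{i0}$ turns into $0$, because the sum over $i$ starts at $i=1$; and each subsequent forcing $-\delta_{ij}$ (with $1\le j\le c-2$) turns into $-\sum_{i=1}^\infty h_i\delta_{ij}=-h_j$. This reproduces exactly the system displayed in the statement. Uniqueness is free: the coefficient matrix of this system is identical to that of \eqref{lamdaphii0starlamda}, which was shown in Theorem~\ref{the resolvent of transition probability} to be invertible.

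For the ``moreover'' claim that every $L_j(\lambda)$ with $j\ge 0$ can be obtained, I would apply the same summation procedure to the remaining Kolmogorov forward equations in \eqref{phiijlamda} (those with index $j\ge c-1$). These take the form
\begin{equation*}
\sum_{k=1}^{j-1}L_k(\lambda)b_{j-k+1}+\bigl[b_1-(c-1)b_0-\lambda\bigr]L_j(\lambda)+cb_0\,L_{j+1}(\lambda)=-h_j\quad(j\ge c-1),
\end{equation*}
which is a three-term forward recursion determining $L_{j+1}(\lambda)$ from $L_1(\lambda),\ldots,L_j(\lambda)$ once $L_0(\lambda),\ldots,L_{c-1}(\lambda)$ have been computed from the displayed system. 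I expect no further obstacle here, as the only subtlety — absolute convergence of $\sum_i h_i\phi_{ij}^*(\lambda)$ — has already been addressed in the first step.
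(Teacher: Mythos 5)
Your derivation is exactly the route the paper intends: the paper offers no separate proof beyond saying the lemma ``follows from Theorem~\ref{the resolvent of transition probability}'', and multiplying each equation of \eqref{lamdaphii0starlamda} (and then of \eqref{phiijlamda}) by $h_i$ and summing over $i\geq1$, with the interchange justified by $\phi_{ij}^*(\lambda)\leq 1/\lambda$ and $h<\infty$, is precisely that argument, including the uniqueness claim via the same coefficient matrix. One small slip in your ``moreover'' step: the stated three-term recursion with diagonal coefficient $[b_1-(c-1)b_0-\lambda]$ and last term $cb_0L_{j+1}(\lambda)$ is valid only for $j\geq c$; the first remaining equation, $j=c-1$, reads $\sum_{k=1}^{c-2}L_k(\lambda)b_{c-k}+[b_1-(c-2)b_0-\lambda]L_{c-1}(\lambda)+cb_0L_c(\lambda)=-h_{c-1}$, after which the recursion proceeds as you wrote.
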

\begin{theorem}\label{abcthth 4.3}
If $h,\beta>0$, then the $Q$-process is always positive recurrent.
\end{theorem}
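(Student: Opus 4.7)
The plan is to apply the standard resolvent criterion for positive recurrence: since the $Q$-process is irreducible (catastrophes link every state $i\geq 1$ to $0$, and $h>0$ links $0$ to $\{1,2,\ldots\}$), it is positive recurrent if and only if $\lim_{\lambda\to 0^+}\lambda r_{00}(\lambda)>0$. So my whole task reduces to evaluating this limit from the explicit expression \eqref{bbb 4.6}.

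First I would rewrite \eqref{bbb 4.6} in the convenient form
\begin{equation*}
[\lambda r_{00}(\lambda)]^{-1}=1+\frac{1}{\lambda+\beta}\Bigl(h-H(u(\lambda+\beta))+\sum_{k=1}^{c-1}u(\lambda+\beta)^{k-1}(c-k)b_0(1-u(\lambda+\beta))L_k(\lambda+\beta)\Bigr),
\end{equation*}
where $L_k(\lambda)=\sum_{i=1}^{\infty}h_i\phi_{ik}^*(\lambda)$ is the quantity whose finite linear system was given in Lemma~\ref{l_k1leqkleqc-1}. The crucial point is that $\beta>0$, so I am evaluating everything at arguments bounded away from $0$: by Lemma~\ref{abclele 2.2}, $u(\beta)\in(0,1)$, and by Lemma~\ref{abclele 2.3}(i)--(ii), $u(\cdot)$ is continuous at $\beta$ with $u(\lambda+\beta)\to u(\beta)<1$ as $\lambda\to 0^+$. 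This is precisely what lets the argument avoid the delicate boundary analysis (the $u(\lambda)\uparrow 1$ case) that made Theorem~\ref{abcthth 3.2} nontrivial.

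Next I would verify that each term passes to its natural limit. The map $H$ is continuous on $[0,1]$ with $H(1)=h<\infty$, so $H(u(\lambda+\beta))\to H(u(\beta))\leq h$. For each fixed $k\in\{1,\ldots,c-1\}$, the bound $\phi_{ik}^*(\lambda+\beta)\leq 1/(\lambda+\beta)\leq 1/\beta$ combined with $\sum_i h_i=h<\infty$ gives $h_i\phi_{ik}^*(\lambda+\beta)\leq h_i/\beta$, so dominated convergence yields $L_k(\lambda+\beta)\to L_k(\beta)$, which is itself finite ($\leq h/\beta$) and uniquely determined by the linear system of Lemma~\ref{l_k1leqkleqc-1} at argument $\beta$. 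Assembling these limits gives
\begin{equation*}
\lim_{\lambda\to 0^+}[\lambda r_{00}(\lambda)]^{-1}=1+\frac{1}{\beta}\Bigl(h-H(u(\beta))+\sum_{k=1}^{c-1}u(\beta)^{k-1}(c-k)b_0(1-u(\beta))L_k(\beta)\Bigr),
\end{equation*}
which is a finite number and is clearly $\geq 1>0$ because every summand in the bracket is non-negative.

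Consequently $\lim_{\lambda\to 0^+}\lambda r_{00}(\lambda)>0$, proving positive recurrence unconditionally on $B_c'(1)$ or $\mu_1$. The only step requiring any care is the dominated-convergence interchange $\lim_\lambda L_k(\lambda+\beta)=L_k(\beta)$, but the uniform bound $h_i\phi_{ik}^*(\lambda+\beta)\leq h_i/\beta$ is trivial, so there is really no hard obstacle here; the statement is essentially a corollary of the fact that $\beta>0$ keeps $u(\beta)$ strictly below $1$, which is the precise mechanism by which catastrophes force stabilization of the system.
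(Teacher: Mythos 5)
Your proof is correct and follows essentially the same route as the paper: both arguments read off $\lim_{\lambda\to 0^+}\lambda r_{00}(\lambda)$ directly from \eqref{bbb 4.6}, using that $\beta>0$ forces the argument $\lambda+\beta$ to stay away from $0$ so that $u(\beta)<1$ and $L_k(\beta)$ is finite, yielding the strictly positive limit \eqref{bbb 4.10}. The only difference is cosmetic — you make explicit the irreducibility/resolvent criterion and the dominated-convergence step $L_k(\lambda+\beta)\to L_k(\beta)$, which the paper leaves implicit.
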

\begin{proof}
It follows from \eqref{bbb 4.6} that
$\lim_{\lambda\rightarrow0^+}r_{00}(\lambda)=\infty$, and so the
$Q$-process is recurrent. Noting $\lim_{t\rightarrow
\infty}p_{00}(t)=\lim_{\lambda\rightarrow 0^+}\lambda
r_{00}(\lambda)$ and again using \eqref{bbb 4.6} yield
\begin{equation}\label{bbb 4.10}
\lim_{t\rightarrow\infty}p_{00}(t)=\beta\Big[\beta+H(1)-H(u(\beta))+\sum_{k=1}^{c-1}L_k(\beta)u(\beta)^{k-1}
(c-k)b_0(1-u(\beta))\Big]^{-1}>0.
\end{equation}
The proof is complete.
\end{proof}
\par
The following theorem further gives the equilibrium distribution
$\{\pi_j;\ j\geq0\}$ of the $Q$-process.
\par
\begin{theorem}\label{abcthth 4.4}
 The equilibrium distribution of the $Q$-process is given by{\small
\begin{equation}\label{4pis4}
\Pi(s)=
\pi_0\Big[1+\frac{s(H(u(\beta))-H(s))}{U_\beta(s)}\Big]+\frac{\sum_{k=1}^{c-1}\pi_k(c-k)b_0[s^k(1-s)
-su(\beta)^{k-1}(1-u(\beta))]}{U_\beta(s)},
\end{equation}}
where $\Pi(s)=\sum_{k=0}^{\infty}\pi_ks^k$, $U_\beta(s)$ is defined
in \eqref{bbb 2.4}. Furthermore,
\begin{align}\label{bbb 4.12}
\pi_0&=\beta\Big[\beta+H(1)-H(u(\beta))+\sum_{k=1}^{c-1}L_j(\beta)u(\beta)^{k-1}(c-k)b_0(1-u(\beta))\Big]^{-1},\\
\pi_j&=\pi_0L_j(\beta)\ \ (j\geq1)\nonumber
\end{align}
and $(L_j(\beta);\ j\geq1)$ is given by
Lemma~\emph{\ref{l_k1leqkleqc-1}}.
\end{theorem}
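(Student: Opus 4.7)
My plan is to invoke the standard relation $\pi_j=\lim_{\lambda\to 0^+}\lambda r_{0j}(\lambda)$, which is legitimate since positive recurrence of the $Q$-process was just established in Theorem~\ref{abcthth 4.3}. The formulas for $r_{00}(\lambda)$ in \eqref{bbb 4.6} and $r_{0j}(\lambda)$ in \eqref{bbb 4.3} provide the two building blocks, and the generating-function identity in Theorem~\ref{abthth 2.1}, together with the relation from the top equation of Lemma~\ref{l_k1leqkleqc-1} evaluated at $\lambda=\beta$, will collapse the sum into the stated closed form.

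First, reading $\pi_0$ off from \eqref{bbb 4.6}: rewrite $\lambda r_{00}(\lambda)=[1+(\lambda+\beta)^{-1}(\cdots)]^{-1}$, and let $\lambda\to 0^+$, using that $\phi_{ik}^*(\lambda+\beta)\to \phi_{ik}^*(\beta)$ for each $i,k$ (bounded convergence justified by \eqref{aa 2.4}) so that $\sum_i h_i\phi_{ik}^*(\lambda+\beta)\to L_k(\beta)$. This yields exactly \eqref{bbb 4.12}. For $j\geq 1$, relation \eqref{bbb 4.3} gives $r_{0j}(\lambda)=r_{00}(\lambda)L_j(\lambda+\beta)$, hence $\pi_j=\pi_0 L_j(\beta)$.

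The main work is producing the closed form \eqref{4pis4}. The plan is to write
\begin{equation*}
\Pi(s)=\pi_0+\pi_0\sum_{j\geq1}L_j(\beta)s^j,
\qquad
\sum_{j\geq 0}L_j(\beta)s^j=\sum_{i=1}^\infty h_i L_i(\beta,s),
\end{equation*}
substitute the Theorem~\ref{abthth 2.1} expression for $L_i(\beta,s)$, and use the identity $B_c(s)-B_k(s)=(c-k)b_0(1-s)$, which is immediate from \eqref{bbb 2.1}. Exchanging summation with $h_i$ collapses $\sum_i h_i\phi_{ik}^*(\beta)$ to $L_k(\beta)$ for $0\leq k\leq c-1$ and $\sum_i h_i s^{i+1}$ to $sH(s)$. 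Then subtracting the $j=0$ term $L_0(\beta)$ and using $U_\beta(s)=B_c(s)-\beta s$ leaves the numerator
\begin{equation*}
\beta s L_0(\beta)+\sum_{k=1}^{c-1}L_k(\beta)(c-k)b_0\,s^k(1-s)-sH(s)
\end{equation*}
over $U_\beta(s)$.

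The final step, which I expect to be the main obstacle as it is the one piece of algebra not pinned down by a single previous equation, is to eliminate $L_0(\beta)$. The first equation in Lemma~\ref{l_k1leqkleqc-1} at $\lambda=\beta$ says
\begin{equation*}
\beta L_0(\beta)=H(u(\beta))-\sum_{k=1}^{c-1}L_k(\beta)u(\beta)^{k-1}(c-k)b_0(1-u(\beta)),
\end{equation*}
since $B_c(u(\beta))-B_k(u(\beta))=(c-k)b_0(1-u(\beta))$ and $B_c(u(\beta))=\beta u(\beta)$. Substituting produces
\begin{equation*}
s\bigl(H(u(\beta))-H(s)\bigr)+\sum_{k=1}^{c-1}L_k(\beta)(c-k)b_0\bigl[s^k(1-s)-s\,u(\beta)^{k-1}(1-u(\beta))\bigr]
\end{equation*}
as the numerator over $U_\beta(s)$, and then invoking $\pi_k=\pi_0 L_k(\beta)$ delivers \eqref{4pis4} as claimed.
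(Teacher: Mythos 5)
Your proposal is correct and takes essentially the same route as the paper's proof: identify $\pi_0$ with the limit \eqref{bbb 4.10} coming from \eqref{bbb 4.6}, obtain $\pi_j=\pi_0L_j(\beta)$ from \eqref{bbb 4.3}, and assemble $\Pi(s)$ from the resolvent generating function \eqref{bbb 2.13}, your use of the first equation of Lemma~\ref{l_k1leqkleqc-1} at $\lambda=\beta$ to eliminate $L_0(\beta)$ being precisely the $h_i$-weighted form of \eqref{bbb 2.14} that the paper invokes implicitly. You simply supply the algebraic details the paper's terse proof omits.
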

\begin{proof}
First note that \eqref{bbb 4.12} is just \eqref{bbb 4.10}, whilst,
it follows from~\eqref{bbb 4.3} that for $j\geq1$,
\begin{equation}\label{pijwhenbeta}
\pi_j=\lim_{\lambda\rightarrow0^+}\lambda
r_{0j}(\lambda)=\pi_0L_j(\beta).
\end{equation}
Hence, using \eqref{bbb 2.13} and the above two equality we can get
\eqref{4pis4}. The proof is comlete.
\end{proof}
\par
As a direct consequence of Theorem~\ref{abcthth 4.4}, we have the
following corollary regarding queue size.
\begin{corollary}
The equilibrium queue size, $N$, has expectation
\begin{align*}
E(N)&=\pi_0\frac{(H(u(\beta))-h-\mu_1)(-\beta)-(H(u(\beta))-h)(B_c^\prime(1)-\beta)}{\beta^2}\nonumber \\
&\quad+\frac{\sum_{k=1}^{c-1}\pi_k(c-k)b_0[\beta+u(\beta)^{k-1}(1-u(\beta))B_c^\prime(1)]}{\beta^2}
\end{align*}
and the equilibrium waiting queue size, $L_w$, has expectation
\begin{align*}
E(L_w)&=\pi_0\Big[\frac{(H(u(\beta))-h-\mu_1)(-\beta)-(H(u(\beta))-h)(B_c^\prime(1)-\beta)}{\beta^2}+c\Big]\nonumber \\
&\quad+\frac{\sum_{k=1}^{c-1}\pi_k(c-k)[b_0(\beta+u(\beta)^{k-1}(1-u(\beta))B_c^\prime(1))+\beta^2]}{\beta^2}-c,
\end{align*}
where $(\pi_k;\ 0\leq k\leq c-1)$ is given in
Theorem~\emph{\ref{abcthth 4.4}}.
\end{corollary}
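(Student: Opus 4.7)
The plan is to compute $E(N)=\Pi'(1)$ by differentiating the generating function \eqref{4pis4} from Theorem~\ref{abcthth 4.4}, and then obtain $E(L_w)$ from $E(N)$ via the server-balance identity
\begin{equation*}
E(L_w)=E(N)-c+\sum_{k=0}^{c-1}(c-k)\pi_k,
\end{equation*}
which follows from $L_w=\max(N-c,0)=N-c+\sum_{k=0}^{c-1}(c-k)\mathbf{1}_{\{N=k\}}$.

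First I would record the values needed at $s=1$. Since $\sum_j b_j=0$ we have $B(1)=0$ and hence $B_c(1)=B(1)+(c-1)b_0(1-1)=0$, so $U_\beta(1)=B_c(1)-\beta=-\beta$ and $U_\beta'(1)=B_c'(1)-\beta$. Also $H(1)=h$ and $H'(1)=\mu_1$. The generating function in \eqref{4pis4} is a linear combination of terms $f(s)/U_\beta(s)$ with $f$ a polynomial in $s$, so the quotient rule gives
\begin{equation*}
\left.\frac{d}{ds}\frac{f(s)}{U_\beta(s)}\right|_{s=1}=\frac{f'(1)(-\beta)-f(1)(B_c'(1)-\beta)}{\beta^2}.
\end{equation*}

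Next I would apply this to each piece. For the $\pi_0$-term with $f_1(s)=s(H(u(\beta))-H(s))$ one gets $f_1(1)=H(u(\beta))-h$ and $f_1'(1)=H(u(\beta))-h-\mu_1$, producing the first bracketed expression in the claimed formula for $E(N)$. For the $k$-th summand with $f_2(s)=(c-k)b_0[s^k(1-s)-su(\beta)^{k-1}(1-u(\beta))]$ one computes $f_2(1)=-(c-k)b_0u(\beta)^{k-1}(1-u(\beta))$ and $f_2'(1)=-(c-k)b_0[1+u(\beta)^{k-1}(1-u(\beta))]$; after simplification the two $\beta u(\beta)^{k-1}(1-u(\beta))$ cross-terms cancel and the residue is exactly $(c-k)b_0[\beta+u(\beta)^{k-1}(1-u(\beta))B_c'(1)]/\beta^2$. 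Summing these contributions yields the stated formula for $E(N)$. Finally, substituting this into $E(L_w)=E(N)+\sum_{k=1}^{c-1}(c-k)\pi_k+c\pi_0-c$ and collecting the added constants into the $\pi_0$-bracket (as the summand $c$) and the $k$-th-bracket (as the summand $\beta^2/\beta^2$) produces the formula for $E(L_w)$.

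The only potential obstacle is bookkeeping rather than any genuine technical difficulty: the $\beta u(\beta)^{k-1}(1-u(\beta))$ terms generated by $f_2'(1)(-\beta)$ and $f_2(1)(B_c'(1)-\beta)$ must cancel cleanly for the tidy form in the statement to appear. Once that cancellation is verified, the remainder of the argument is a direct algebraic rearrangement. If either of $B_c''(1)$ or $H''(1)$ fails to be finite, then $\Pi'(1)$ diverges because $f_1'(1)$ or the second derivative terms implicit in it blow up, giving $E(N)=E(L_w)=\infty$ as asserted.
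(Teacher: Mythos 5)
Your proposal is correct and takes essentially the route the paper intends: the corollary is stated as a direct consequence of Theorem~\ref{abcthth 4.4}, i.e.\ $E(N)=\Pi'(1)$ computed by the quotient rule at $s=1$ (using $U_\beta(1)=-\beta$, $U_\beta'(1)=B_c'(1)-\beta$, with exactly the cancellation of the $\beta u(\beta)^{k-1}(1-u(\beta))$ cross-terms you describe), followed by the identity $E(L_w)=E(N)-c+\sum_{k=0}^{c-1}(c-k)\pi_k$. One small correction: your closing remark about $B_c''(1)$ and $H''(1)$ belongs to the Section~3 corollary, not this one -- here the denominator $U_\beta(1)=-\beta\neq0$, so no second derivatives enter, the statement carries no such finiteness caveat, and only $\mu_1=H'(1)$ (and $B_c'(1)$) governs whether the expressions are finite.
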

\par
From now on, we consider the first effective catastrophe occurrence
time of the $Q$-process $\{X_t;\ t\geq0\}$.
\par
 For all $j,n\in {\bf{E}}$ and $t>0$, $p_{jn}(t)$ satisfy the
following system of forward equations {\small
\begin{equation}\label{the forward equations of Nt}
\begin{cases}
p'_{j0}(t)=-(h+\beta)p_{j0}(t)+b_0p_{j1}(t)+\beta,\\
p'_{jn}(t)=h_np_{j0}(t)+\sum\limits_{k=1}^{n-1}b_{n-k+1}p_{jk}(t)\\
\hspace{1.5cm}+[b_1-(n-1)b_0-\beta]p_{jn}(t)+(n+1)b_0p_{j,n+1}(t),\ \ 1\leq n\leq c-1,\\
p'_{jn}(t)=h_np_{j0}(t)+\sum\limits_{k=1}^{n-1}b_{n-k+1}p_{jk}(t)+[b_1-(c-1)b_0-\beta]p_{jn}(t)+cb_0p_{j,n+1}(t),
 n\geq c.
\end{cases}
\end{equation}}
Here $\sum_{k=1}^{n-1}=0$ if $n=1$ as a notation. Obviously, for all
$j,n\in {\bf{E}}$ and $t>0$, the $\tilde{Q}$-function
$\tilde{p}_{jn}(t)$ satisfy the system of forward equations
\eqref{the forward equations of Nt} by setting $\beta=0$.
\par
The following lemma reveals that $(p_{ij}(t))$ can be expressed in
terms of $(\tilde{p}_{ij}(t))$.
\begin{lemma}\label{the link between Nt and hatNt}\emph{(}see \emph{\cite{Pakes1997}}\emph{)}
For all $j,n\in {\bf{E}},\ t>0$, we have
\begin{equation}\label{the link between pjnt and hatpjnt}
p_{jn}(t)=e^{-\beta t}\tilde{p}_{jn}(t)+\beta\int_0^te^{-\beta
\tau}\tilde{p}_{0n}(\tau)\emph{d}\tau,
\end{equation}
or
\begin{equation}\label{link between pi and hatpi}
r_{jn}(\lambda)=\tilde{r}_{jn}(\lambda+\beta)+\frac{\beta}{\lambda}\tilde{r}_{0n}(\lambda+\beta),\
\ \ \lambda>0.
\end{equation}
\end{lemma}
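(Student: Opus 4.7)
The plan is to establish the probabilistic identity \eqref{the link between pjnt and hatpjnt} by a coupling together with a last-exit decomposition, and then deduce the resolvent identity \eqref{link between pi and hatpi} by a routine Laplace transform computation.

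First I would realize $\{X_t\}$ as a superposition of a version of $\{\tilde{X}_t\}$ with an independent homogeneous Poisson process $\{N_t\}$ of rate $\beta$: between the epochs of $\{N_t\}$ the process evolves according to $\tilde{Q}$, while at each epoch it is reset to state $0$ if the current state is $\geq 1$ and is left unchanged (still $0$) otherwise. A check of infinitesimal rates shows that this construction has generator $\tilde{Q}+Q_d=Q$, and since $Q$ is bounded and conservative the Feller minimal $Q$-process is unique, so this is a valid realization of $\{X_t\}$.

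The crucial observation is that immediately after any epoch of $\{N_t\}$ the state is $0$, irrespective of the pre-epoch state. I would then decompose $\{X_t=n\}$ according to whether $\{N_t\}$ has fired in $[0,t]$. On $\{N_t=0\}$, which has probability $e^{-\beta t}$, the process coincides with $\{\tilde X_t\}$ on $[0,t]$, contributing $e^{-\beta t}\tilde p_{jn}(t)$. On $\{N_t\geq 1\}$, condition on the last epoch being at time $t-\tau$, whose density on $(0,t)$ is $\beta e^{-\beta\tau}$; by the key observation $X_{t-\tau}=0$, and since no further epoch occurs in $(t-\tau,t]$ the process evolves for time $\tau$ as $\{\tilde X_t\}$ started at $0$, contributing $\tilde p_{0n}(\tau)$. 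Summing the two contributions via the law of total probability yields \eqref{the link between pjnt and hatpjnt}. To get \eqref{link between pi and hatpi}, multiply by $e^{-\lambda t}$ and integrate; the first term becomes $\tilde r_{jn}(\lambda+\beta)$, while for the convolution term Fubini gives
$$\int_0^\infty e^{-\lambda t}\beta\int_0^t e^{-\beta\tau}\tilde p_{0n}(\tau)\,d\tau\,dt=\beta\int_0^\infty e^{-(\lambda+\beta)\tau}\tilde p_{0n}(\tau)\int_\tau^\infty e^{-\lambda(t-\tau)}\,dt\,d\tau=\frac{\beta}{\lambda}\tilde r_{0n}(\lambda+\beta).$$

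The main technical point to justify is that the coupling construction is bona fide, i.e. the strong Markov property applies at the Poisson epochs and the ``last epoch'' is a measurable, almost surely well-defined random time; these are standard facts for bounded conservative $q$-matrices but must be spelled out for rigor. An alternative, purely analytic route, which I would fall back on if the probabilistic argument felt too informal, is to verify directly that the right-hand side of \eqref{the link between pjnt and hatpjnt} satisfies the forward equations \eqref{the forward equations of Nt} with initial condition $\delta_{jn}$, by differentiating in $t$ and using that $\tilde p_{jn}(t)$ and $\tilde p_{0n}(t)$ satisfy \eqref{the forward equations of Nt} with $\beta=0$; uniqueness of the Feller minimal $Q$-process then forces equality with $p_{jn}(t)$.
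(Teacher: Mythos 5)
Your argument is correct. The paper itself offers no proof of this lemma—it simply cites Pakes (1997)—and your construction (realizing $\{X_t\}$ by superposing an independent Poisson$(\beta)$ stream of reset-to-$0$ epochs on the $\tilde{Q}$-dynamics, then decomposing at the last catastrophe epoch before $t$, followed by the routine Fubini computation for the resolvent identity) is essentially the standard killing-and-resurrection argument behind the cited result, so there is nothing to fault in the approach. Your analytic fallback is also sound: one checks directly that $e^{-\beta t}\tilde{p}_{jn}(t)+\beta\int_0^t e^{-\beta\tau}\tilde{p}_{0n}(\tau)\,\mathrm{d}\tau$ satisfies the forward system \eqref{the forward equations of Nt} with initial condition $\delta_{jn}$ (an integration by parts in the convolution term produces exactly the $\beta e^{-\beta t}\tilde{p}_{0n}(t)$ needed), and uniqueness for the bounded conservative $Q$ finishes the identification.
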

\par
Let $C_{j0}$ denote the first occurrence time of an effective
catastrophe when $X_0=j\ (j\in {\bf{E}})$, and let $d_{j0}(t)\
(t>0)$ be the density of $C_{j0}$. In order to investigate on the
features of $C_{j0}$, let us refer to a modified $M^X/M/c$ queue
with catastrophes that will be denoted as $\{M_t;\ t\geq0\}$. Its
behavior is identical to that of $\{X_t;\ t\geq0\}$ before hitting
$0$, the only difference is that the effect of a catastrophe from
state $n>0$ makes a jump from $n$ to the absorbing state $-1$. In
other words, $\{M_t;\ t\geq0\}$ is a modified $M^X/M/c$ queuing
process with catastrophes with state space ${\bf
S}=\{-1,0,1,\ldots\}$. Its $q$-matrix is
\begin{equation}\label{QMthe  definition of QM}
Q_M=\bar{Q}^*+\bar{Q}_s+\bar{Q}_d,
\end{equation}
 where $\bar{Q}^*=(\bar{q}_{ij}^*,\ i,j\in{\bf S})$ and
$\bar{Q}_s=(\bar{q}_{ij}^{(s)},\ i,j\in{\bf S})$ are given by
\begin{equation*}
\bar{q}^*_{ij}
   =\begin{cases}q^*_{ij},\ & \mbox{if\ $i,j\in {\bf E}$},\\
         0,              & \mbox{otherwise}
    \end{cases}
\end{equation*}
and
\begin{equation*}
\bar{q}^{(s)}_{ij} =\begin{cases}q^{(s)}_{ij},\ & \mbox{if\ $i,j\in
{\bf
E}$},\\
                            0, \ & \mbox{otherwise},
\end{cases}
\end{equation*}
respectively. $\bar{Q}_d=(\bar{q}_{ij}^{(d)},\ i,j\in{\bf S})$ is
given by
\begin{equation}\label{Qdthe  definition of Qd}
\bar{q}_{ij}^{(d)}
   =\begin{cases}
    \beta, &  \mbox{if~$i\geq1,\ j=-1,$}\\
-\beta, &  \mbox{if~$i\geq1,\ j=i,$}\\
     0,              & \mbox{otherwise}.
    \end{cases}
\end{equation}
\par
Let $H(t)=(h_{jn}(t),\ j,n\in {\bf S})$ and
$\eta(\lambda)=(\eta_{jn}(\lambda),\ j,n\in {\bf S})$ be the
 $Q_M$-function and $Q_M$-resolvent, respectively. For all $j\in
{\bf{E}},\ n\in {\bf S}$ and $t\geq0$, $h_{jn}(t)$ satisfy the
following system of forward equations {\small
\begin{equation}\label{Mtforward equation for
Mt} \hspace{-1.007mm}  \begin{cases}
h'_{j,-1}(t)=\beta(1-h_{j,-1}(t)-h_{j0}(t)),\\
h'_{j0}(t)=-hh_{j0}(t)+b_0h_{j1}(t),\\
h'_{jn}(t)=h_nh_{j0}(t)+\sum\limits_{k=1}^{n-1}b_{n-k+1}h_{jk}(t)\\
\hspace{1.5cm}+[b_1-(n-1)b_0-\beta]h_{jn}(t)+(n+1)b_0h_{j,n+1}(t),\   1\leq n\leq c-1,\\
h'_{jn}(t)=h_nh_{j0}(t)+\sum\limits_{k=1}^{n-1}b_{n-k+1}h_{jk}(t)+[b_1-(c-1)b_0-\beta]h_{jn}(t)+cb_0h_{j,n+1}(t),
 n\geq c.
\end{cases}
\end{equation}}
By the relation of $\{X_t; t\geq 0\}$ and $\{M_t; t\geq 0\}$, we
have
\begin{equation}\label{link between cj0 and Mt}
P(C_{j0}>t)\equiv\int_t^{+\infty}d_{j0}(\tau)\emph{d}\tau=\sum_{n=0}^{+\infty}h_{jn}(t)=1-h_{j,-1}(t),\
\ j\in {\bf{E}}.
\end{equation}
\par
In the following theorem we shall express the Laplace transform
$(\eta_{jn}(\lambda);\ j\in {\bf{E}},n\in {\bf S})$.
\begin{theorem}\label{etaj-1theorem}
 For all $j\in {\bf{E}}$ and $\lambda>0$, we have
\begin{align}\label{etaj-1}
&\eta_{j,-1}(\lambda)=\frac{\beta}{\lambda+\beta}\bigg[\frac{1}{\lambda}-
\frac{\tilde{r}_{j0}(\lambda+\beta)}{1-\beta\tilde{r}_{00}(\lambda+\beta)}\bigg],\\
\label{jnetajn}
&\eta_{jn}(\lambda)=\tilde{r}_{jn}(\lambda+\beta)+\beta\tilde{r}_{0n}(\lambda+\beta)
\frac{\tilde{r}_{j0}(\lambda+\beta)}{1-\beta\tilde{r}_{00}(\lambda+\beta)},\
\ n\geq 0,
\end{align}
where $(\tilde{r}_{jn}(\lambda+\beta);\ j,n\in {\bf{E}})$ is given
by Remark~\emph{\ref{remark 3.1}}.
\end{theorem}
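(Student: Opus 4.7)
The plan is to derive \eqref{jnetajn} directly from the Kolmogorov forward equations \eqref{Mtforward equation for Mt} by matching them against the Laplace-transformed forward equations satisfied by $(\tilde{r}_{jn}(\lambda+\beta))_{n\geq 0}$, and then to read off \eqref{etaj-1} from the single equation governing the absorbing state $-1$.

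First I would apply the Laplace transform to the rows of \eqref{Mtforward equation for Mt} indexed by $n\geq 0$. Inspection shows that the resulting linear system for $(\eta_{jn}(\lambda))_{n\geq 0}$ coincides with the Laplace transform of the $\tilde{Q}$-forward equations taken at the shifted parameter $\lambda+\beta$, i.e.\ with the system satisfied by $(\tilde{r}_{jn}(\lambda+\beta))_{n\geq 0}$, \emph{except} in the single row $n=0$, where the diagonal coefficient is $\lambda+h$ rather than $\lambda+\beta+h$, simply because no catastrophe can fire from state $0$. Setting $F_n := \eta_{jn}(\lambda)-\tilde{r}_{jn}(\lambda+\beta)$ and subtracting the two systems, $F_n$ satisfies the homogeneous version of the $\eta$-system for $n\geq 1$, while for $n=0$ the source reduces to $\beta\,\tilde{r}_{j0}(\lambda+\beta)$.

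The key observation is that this system is precisely the one solved by $(\eta_{0n}(\lambda))_{n\geq 0}$, except that the unit Kronecker source at $n=0$ is replaced by $\beta\tilde{r}_{j0}(\lambda+\beta)$. By linearity and uniqueness of the bounded solution of the forward equations (for which I would invoke the resolvent decomposition device of Chen and Renshaw \cite{Ch1990} already used to prove Theorem~\ref{abcthth 3.1}), this yields
\[
\eta_{jn}(\lambda)=\tilde{r}_{jn}(\lambda+\beta)+\beta\,\tilde{r}_{j0}(\lambda+\beta)\,\eta_{0n}(\lambda),\qquad n\geq 0.
\]
Specialising $j=n=0$ and solving the resulting scalar relation gives $\eta_{00}(\lambda)=\tilde{r}_{00}(\lambda+\beta)/[1-\beta\tilde{r}_{00}(\lambda+\beta)]$, and feeding this back yields $\eta_{0n}(\lambda)=\tilde{r}_{0n}(\lambda+\beta)/[1-\beta\tilde{r}_{00}(\lambda+\beta)]$ for every $n\geq 0$; substitution into the displayed identity produces \eqref{jnetajn} at once.

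For the absorbing component, the first equation of \eqref{Mtforward equation for Mt} is $h_{j,-1}'(t)=\beta(1-h_{j,-1}(t)-h_{j0}(t))$. Since $h_{j,-1}(0)=0$ for every $j\in {\bf E}$, taking Laplace transforms yields $(\lambda+\beta)\eta_{j,-1}(\lambda)=\beta/\lambda-\beta\,\eta_{j0}(\lambda)$, and inserting the formula $\eta_{j0}(\lambda)=\tilde{r}_{j0}(\lambda+\beta)/[1-\beta\tilde{r}_{00}(\lambda+\beta)]$ (the $n=0$ case of what was just proved) delivers \eqref{etaj-1}. The one genuinely delicate point is the uniqueness step invoked above: the infinite linear system a priori admits multiple solutions, and one must restrict to the bounded/integrable resolvent solution. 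Since $Q_M$ is conservative and bounded, this is guaranteed either by the Chen--Renshaw decomposition theorem or, directly, by the uniqueness of the Feller minimal process and of its bounded forward resolvent.
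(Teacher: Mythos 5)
Your argument is correct, but it takes a recognisably different route from the paper's. The paper also starts from the Laplace transform of \eqref{Mtforward equation for Mt}, but it then posits the ansätze $\eta_{0n}(\lambda)=A(\lambda)r_{0n}(\lambda)$ and $\eta_{jn}(\lambda)=D_j(\lambda)r_{jn}(\lambda)+F_j(\lambda)r_{0n}(\lambda)$ in terms of the resolvent $r_{jn}(\lambda)$ of the catastrophe process $\{X_t\}$, determines $A,D_j,F_j$ by matching against the transformed equations \eqref{00Laplace forward equation for Nt}, and only then converts everything into $\tilde r_{jn}(\lambda+\beta)$ via the Pakes identity \eqref{link between pi and hatpi} of Lemma~\ref{the link between Nt and hatNt}. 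You bypass both the $Q$-resolvent and Lemma~\ref{the link between Nt and hatNt}: comparing the $\eta$-system directly with the $\tilde Q$-system at the shifted parameter $\lambda+\beta$, you observe that the difference $F_n=\eta_{jn}(\lambda)-\tilde r_{jn}(\lambda+\beta)$ solves the $\eta_{0\cdot}$-system with the unit source at $n=0$ replaced by $\beta\tilde r_{j0}(\lambda+\beta)$, whence $\eta_{jn}(\lambda)=\tilde r_{jn}(\lambda+\beta)+\beta\tilde r_{j0}(\lambda+\beta)\eta_{0n}(\lambda)$; setting $j=0$ and solving (the denominator is harmless since $\beta\tilde r_{00}(\lambda+\beta)\le\beta/(\lambda+\beta)<1$) gives \eqref{jnetajn}, and the first equation of \eqref{Mtforward equation for Mt} gives \eqref{etaj-1} exactly as in the paper. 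What your route buys is economy and transparency: the final formula in terms of the catastrophe-free resolvent emerges in one superposition step, and it makes explicit the uniqueness hypothesis (uniqueness of the summable solution of the transformed forward system, guaranteed here by boundedness of $Q_M$, or by the Chen--Renshaw decomposition) that the paper's ansatz-matching uses only implicitly. What the paper's route buys is the intermediate relations $\eta_{jn}=r_{jn}+F_j r_{0n}$ with the explicit $A(\lambda)$, $F_j(\lambda)$ of \eqref{Alambda} and \eqref{BlambdaClambda}, i.e.\ a direct link between $\{M_t\}$ and the catastrophe process $\{X_t\}$ itself, with the passage to $\tilde r$ delegated to the general killing--resurrection lemma. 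Both proofs rest on the same Laplace-transformed forward equations, so the difference is one of organisation rather than substance, but your version is self-contained at this point of the paper and equally rigorous.
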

\begin{proof} For $j=0$, taking the Laplace transform of the second to the fourth equations in
\eqref{Mtforward equation for Mt}, we obtain
  {\small
\begin{equation}\label{00MtLaplace forward equation for Mt}
\begin{cases}
(\lambda+h)\eta_{00}(\lambda)-1=b_0\eta_{01}(\lambda),\\
[\lambda-b_1+(n-1)b_0+\beta]\eta_{0n}(\lambda)=h_n\eta_{00}(\lambda)+\sum\limits_{k=1}^{n-1}b_{n-k+1}\eta_{0k}(\lambda)\\
\hspace{5.5cm}+(n+1)b_0\eta_{0,n+1}(\lambda),\   1\leq n\leq c-1,\\
[\lambda-b_1+(c-1)b_0+\beta]\eta_{0n}(\lambda)=h_n\eta_{00}(\lambda)+\sum\limits_{k=1}^{n-1}b_{n-k+1}\eta_{0k}(\lambda)
+cb_0\eta_{0,n+1}(\lambda),
 n\geq c.
\end{cases}
\end{equation}}
Similarly, taking the Laplace transform of \eqref{the forward
equations of Nt}, we have
 {\small
\begin{equation}\label{00Laplace forward equation for Nt}
\begin{cases}
(\lambda+h+\beta)r_{00}(\lambda)-1=b_0r_{01}(\lambda)+\frac{\beta}{\lambda}, \\
[\lambda-b_1+(n-1)b_0+\beta]r_{0n}(\lambda)=h_nr_{00}(\lambda)+\sum\limits_{k=1}^{n-1}b_{n-k+1}r_{0k}(\lambda)\\
\hspace{5.5cm}+(n+1)b_0r_{0,n+1}(\lambda),\  1\leq n\leq c-1,\\
[\lambda-b_1+(c-1)b_0+\beta]r_{0n}(\lambda)=h_nr_{00}(\lambda)+\sum\limits_{k=1}^{n-1}b_{n-k+1}r_{0k}(\lambda)
+cb_0r_{0,n+1}(\lambda),
 n\geq c.
\end{cases}
\end{equation}}
Let
\begin{equation}\label{etaonlambda}
\eta_{0n}(\lambda)=A(\lambda)r_{0n}(\lambda),\ \ n\geq 0.
\end{equation}
Substituting \eqref{etaonlambda} into \eqref{00MtLaplace forward
equation for Mt}, and using \eqref{00Laplace forward equation for
Nt}, we obtain
\begin{equation}\label{Alambda}
A(\lambda)=\frac{\lambda}{\lambda+\beta-\lambda\beta
r_{00}(\lambda)}.
\end{equation}
Making use of \eqref{link between pi and hatpi} in
\eqref{etaonlambda}, with $A(\lambda)$ given in \eqref{Alambda}, we
obtain \eqref{jnetajn} for $j=0$.
\par
For $1\leq j\leq c-1$, taking the Laplace transform of the second to
the fourth equations in \eqref{Mtforward equation for Mt}, we have
{\small
\begin{equation}\label{11MtLaplace forward equation for Mt}
\begin{cases}
(\lambda+h)\eta_{j0}(\lambda)=b_0\eta_{j1}(\lambda),\\
[\lambda-b_1+(n-1)b_0+\beta]\eta_{jn}(\lambda)-\delta_{jn}=h_n\eta_{j0}(\lambda)+\sum\limits_{k=1}^{n-1}b_{n-k+1}
\eta_{jk}(\lambda)\\
\hspace{6.5cm}+(n+1)b_0\eta_{j,n+1}(\lambda),\  1\leq n\leq c-1,\\
[\lambda-b_1+(c-1)b_0+\beta]\eta_{jn}(\lambda)=h_n\eta_{j0}(\lambda)+\sum\limits_{k=1}^{n-1}b_{n-k+1}\eta_{jk}
(\lambda)+cb_0\eta_{j,n+1}(\lambda), n\geq c,
\end{cases}
\end{equation}}
and for $j\geq c$,{\small
\begin{equation}\label{22MtLaplace forward equation
for Mt}
\begin{cases}
(\lambda+h)\eta_{j0}(\lambda)=b_0\eta_{j1}(\lambda),\\
[\lambda-b_1+(n-1)b_0+\beta]\eta_{jn}(\lambda)=h_n\eta_{j0}(\lambda)+\sum\limits_{k=1}^{n-1}b_{n-k+1}\eta_{jk}
(\lambda)\\
\hspace{5.5cm}+(n+1)b_0\eta_{j,n+1}(\lambda),\ \ 1\leq n\leq c-1,\\
[\lambda-b_1+(c-1)b_0+\beta]\eta_{jn}(\lambda)-\delta_{jn}=h_n\eta_{j0}(\lambda)
+\sum\limits_{k=1}^{n-1}b_{n-k+1}\eta_{jk}
(\lambda)\\
\hspace{6.5cm}+cb_0\eta_{j,n+1}(\lambda),\ \ \ n\geq c.
\end{cases}
\end{equation}}
Let
\begin{equation}\label{etajnetajnlambda}
\eta_{jn}(\lambda)=D_j(\lambda)r_{jn}(\lambda)+F_j(\lambda)r_{0n}(\lambda),\
\ j\geq1,\ \ n\geq 0.
\end{equation}
Recalling the Laplace transform of \eqref{the forward equations of
Nt}, and substituting \eqref{etajnetajnlambda} into
\eqref{11MtLaplace forward equation for Mt}, \eqref{22MtLaplace
forward equation for Mt} respectively, one has
\begin{equation}\label{BlambdaClambda}
D_j(\lambda)=1,\ \ F_j(\lambda)=\frac{\beta[\lambda
r_{j0}(\lambda)-1]}{\lambda+\beta-\lambda\beta r_{00}(\lambda)},\ \
j\geq1.
\end{equation}
Hence, making use of \eqref{link between pi and hatpi} in
\eqref{etajnetajnlambda}, with $D_j(\lambda)$ and $F_j(\lambda)$
given in \eqref{BlambdaClambda}, some straightforward calculations
lead us to \eqref{jnetajn} for $j\geq1$. Furthermore, taking the
Laplace transform of the first equation in \eqref{Mtforward equation
for Mt}, we obtain
$\eta_{j,-1}(\lambda)=\frac{\beta}{\lambda+\beta}\Big[\frac{1}{\lambda}-\eta_{j0}(\lambda)\Big]$.
Hence, making use of \eqref{jnetajn} for $n=0$, \eqref{etaj-1}
finally follows. The proof is complete.
\end{proof}
\par
Let us now denote by $\Delta_{j0}(\lambda)$ the Laplace transform of
$d_{j0}(t)\ (j\in {\bf{E}})$. Since \eqref{link between cj0 and Mt}
implies $d_{j0}(t)=h'_{j,-1}(t)\ (j\in {\bf{E}})$, i.e.,
$\Delta_{j0}(\lambda)=\lambda\eta_{j,-1}(\lambda)$, recalling
\eqref{etaj-1} for $j\in {\bf{E}}$ and $\lambda>0$, we immediately
have the following corollary.
\begin{corollary}\label{deltajolamdadeltajolamda}
For all $j\in {\bf{E}}$, there holds
\begin{equation}\label{deltajolamda}
\Delta_{j0}(\lambda)=\frac{\beta}{\lambda+\beta}-\frac{\lambda}{\lambda+\beta}\cdot
\frac{\beta\tilde{r}_{j0}(\lambda+\beta)}{1-\beta\tilde{r}_{00}(\lambda+\beta)},
\end{equation}
where $\tilde{r}_{j0}(\lambda+\beta)\ (j\in {\bf{E}})$ are given by
Remark~\emph{\ref{remark 3.1}}.
\end{corollary}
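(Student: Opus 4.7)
The plan is to derive the formula for $\Delta_{j0}(\lambda)$ as a direct consequence of Theorem~\ref{etaj-1theorem}, exploiting the identification $\Delta_{j0}(\lambda)=\lambda\eta_{j,-1}(\lambda)$ foreshadowed in the paragraph preceding the corollary. Given how much work has already been done, the corollary is essentially a one-line calculation, and my plan is simply to organize the two ingredients carefully.

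First I would justify the relation $\Delta_{j0}(\lambda)=\lambda\eta_{j,-1}(\lambda)$. From \eqref{link between cj0 and Mt} we have $P(C_{j0}>t)=1-h_{j,-1}(t)$, so the density is $d_{j0}(t)=h'_{j,-1}(t)$. Applying integration by parts to the Laplace transform,
\begin{equation*}
\Delta_{j0}(\lambda)=\int_0^\infty e^{-\lambda t}h'_{j,-1}(t)\,dt
=\bigl[e^{-\lambda t}h_{j,-1}(t)\bigr]_0^\infty
+\lambda\int_0^\infty e^{-\lambda t}h_{j,-1}(t)\,dt.
\end{equation*}
Since $j\in\mathbf{E}$ means the process starts outside the absorbing state $-1$, we have $h_{j,-1}(0)=0$; the boundary term at infinity vanishes because $0\le h_{j,-1}(t)\le 1$ and $e^{-\lambda t}\to 0$ as $t\to\infty$. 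This leaves $\Delta_{j0}(\lambda)=\lambda\eta_{j,-1}(\lambda)$.

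Next, I would substitute the formula for $\eta_{j,-1}(\lambda)$ provided by \eqref{etaj-1}:
\begin{equation*}
\lambda\eta_{j,-1}(\lambda)
=\frac{\lambda\beta}{\lambda+\beta}\left[\frac{1}{\lambda}-\frac{\tilde r_{j0}(\lambda+\beta)}{1-\beta\tilde r_{00}(\lambda+\beta)}\right]
=\frac{\beta}{\lambda+\beta}-\frac{\lambda}{\lambda+\beta}\cdot\frac{\beta\tilde r_{j0}(\lambda+\beta)}{1-\beta\tilde r_{00}(\lambda+\beta)},
\end{equation*}
which is exactly the claimed expression \eqref{deltajolamda}. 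The representation of $\tilde r_{j0}(\lambda+\beta)$ and $\tilde r_{00}(\lambda+\beta)$ in computable form is precisely what Remark~\ref{remark 3.1} provides, so nothing further is required.

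There is no genuine obstacle here; the whole content is the probabilistic interpretation $d_{j0}=h'_{j,-1}$ together with the algebraic rearrangement above. The only point that merits explicit mention in the write-up is the verification $h_{j,-1}(0)=0$ (so that the boundary term in the integration by parts does not contribute), which holds because $-1$ is absorbing and is not the initial state whenever $j\in\mathbf{E}$.
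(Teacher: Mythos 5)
Your proposal follows exactly the paper's route: identify $d_{j0}(t)=h'_{j,-1}(t)$ from \eqref{link between cj0 and Mt}, deduce $\Delta_{j0}(\lambda)=\lambda\eta_{j,-1}(\lambda)$, and substitute \eqref{etaj-1}; your explicit integration by parts with the check $h_{j,-1}(0)=0$ just spells out the step the paper states tersely. It is correct and essentially identical to the paper's argument.
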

\par
 Next we shall study the expectation and variance of the first catastrophe time
 $C_{j0}$. Since
$E(C_{j0})=-\lim_{\lambda\rightarrow0^+}\Delta_{j0}^\prime(\lambda)$
and
$E(C_{j0}^2)=\lim_{\lambda\rightarrow0^+}\Delta_{j0}^{\prime\prime}(\lambda)$,
by \eqref{deltajolamda} and after some easy algebraic computation,
we can immediately obtain the following theorem.
\begin{theorem}
For all $j\in {\bf{E}}$, there holds {\small
\begin{align}
\label{ECj0}
E(C_{j0})&=\frac{1}{\beta}+\frac{\tilde{r}_{j0}(\beta)}{1-\beta\tilde{r}_{00}(\beta)},\\
\label{VarCj0}
Var(C_{j0})&=\frac{1}{\beta^2}\bigg\{1-\frac{\beta^2\tilde{r}_{j0}^2(\beta)}{[1-\beta\tilde{r}_{00}(\beta)]^2}
-\frac{2\beta^2}{1-\beta\tilde{r}_{00}(\beta)}\frac{\emph{d}}{\emph{d}\beta}\tilde{r}_{j0}(\beta)
-\frac{2\beta^3\tilde{r}_{j0}(\beta)}{[1-\beta\tilde{r}_{00}(\beta)]^2}\frac{\emph{d}}{\emph{d}\beta}\tilde{r}_{00}
(\beta)\bigg\},
\end{align}}
where $\tilde{r}_{j0}(\beta)\ (j\in {\bf{E}})$ are given by
Remark~\emph{\ref{remark 3.1}}.
\end{theorem}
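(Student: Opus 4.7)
The plan is to start from the Laplace-transform representation for $\Delta_{j0}(\lambda)$ established in Corollary~\ref{deltajolamdadeltajolamda} and to extract the first two moments of $C_{j0}$ via the standard identities $E(C_{j0}) = -\lim_{\lambda\to 0^+}\Delta_{j0}'(\lambda)$ and $E(C_{j0}^2) = \lim_{\lambda\to 0^+}\Delta_{j0}''(\lambda)$, then form the variance as $E(C_{j0}^2)-E(C_{j0})^2$. The computation is purely a differentiation exercise; organizing it cleanly so that the stated formulas emerge is the only real task.

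First I would introduce the abbreviations
$$\Delta_{j0}(\lambda) = \frac{\beta}{\lambda+\beta} - f(\lambda)\,g_j(\lambda+\beta),\qquad f(\lambda):=\frac{\lambda\beta}{\lambda+\beta},\qquad g_j(\mu):=\frac{\tilde{r}_{j0}(\mu)}{1-\beta\tilde{r}_{00}(\mu)}.$$
The auxiliary function $f$ satisfies $f(0)=0$, $f'(0)=1$, and $f''(0)=-2/\beta$ (read off from $f(\lambda)=\beta-\beta^{2}/(\lambda+\beta)$). Together with the elementary derivatives $\frac{d}{d\lambda}\bigl[\beta/(\lambda+\beta)\bigr]|_{0}=-1/\beta$ and $\frac{d^{2}}{d\lambda^{2}}\bigl[\beta/(\lambda+\beta)\bigr]|_{0}=2/\beta^{2}$, Leibniz's rule gives
$$\Delta_{j0}'(0) = -\tfrac{1}{\beta}-g_j(\beta),\qquad \Delta_{j0}''(0) = \tfrac{2}{\beta^{2}}+\tfrac{2}{\beta}g_j(\beta)-2g_j'(\beta),$$
where the $-2/\beta$ from $f''(0)$ and the extra $+2g_j'(\beta)$ from the mixed term appear because $f(0)=0$ kills the pure $g_j''$ contribution. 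The identity $E(C_{j0})=-\Delta_{j0}'(0)$ is then \eqref{ECj0}.

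For the variance I would compute
$$\mathrm{Var}(C_{j0}) = \Delta_{j0}''(0) - [\Delta_{j0}'(0)]^{2} = \tfrac{2}{\beta^{2}}+\tfrac{2}{\beta}g_j(\beta)-2g_j'(\beta) - \Bigl[\tfrac{1}{\beta}+g_j(\beta)\Bigr]^{2}.$$
The cross terms $\tfrac{2}{\beta}g_j(\beta)$ cancel, leaving $\mathrm{Var}(C_{j0}) = \tfrac{1}{\beta^{2}}-g_j(\beta)^{2}-2g_j'(\beta)$. Expanding $g_j'$ by the quotient rule gives
$$g_j'(\beta)=\frac{\tilde{r}_{j0}'(\beta)}{1-\beta\tilde{r}_{00}(\beta)}+\frac{\beta\,\tilde{r}_{j0}(\beta)\,\tilde{r}_{00}'(\beta)}{[1-\beta\tilde{r}_{00}(\beta)]^{2}},$$
and factoring out $1/\beta^{2}$ reproduces formula \eqref{VarCj0} verbatim.

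The only step requiring care is keeping track of which terms in the product rule survive at $\lambda=0$; once one exploits $f(0)=0$ this is automatic. No new analytic input is needed beyond the explicit Laplace transform from Corollary~\ref{deltajolamdadeltajolamda} and the fact that $\tilde{r}_{j0}$ and $\tilde{r}_{00}$ are smooth in a neighbourhood of $\beta>0$, which is immediate since $\beta>0$ keeps us away from the spectral boundary $\lambda=0$ of $\tilde{Q}$.
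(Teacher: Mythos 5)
Your proposal is correct and follows exactly the route the paper indicates: differentiate the Laplace transform $\Delta_{j0}(\lambda)$ from Corollary~\ref{deltajolamdadeltajolamda} at $\lambda=0^+$ to obtain the first two moments, then form the variance; your bookkeeping via $f(\lambda)=\lambda\beta/(\lambda+\beta)$ with $f(0)=0$, $f'(0)=1$, $f''(0)=-2/\beta$ checks out and reproduces \eqref{ECj0} and \eqref{VarCj0} verbatim. The paper omits this "easy algebraic computation" entirely, so your write-up simply supplies the details it leaves to the reader.
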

\par
The following theorems will show the asymptotic behavior of the mean
first catastrophe time for $\beta\downarrow0$ and for
$\beta\rightarrow+\infty$.
\begin{theorem}
\emph{(i)} If $B_c^\prime(1)<0$ and $\mu_1<\infty$, then
\begin{equation}\label{xi0ECj0}
\lim_{\beta\downarrow0}\beta
E(C_{j0})=\frac{-B_c^\prime(1)+\mu_1+\sum_{k=1}^{c-1}r_k(c-k)b_0}
{\mu_1+\sum_{k=1}^{c-1}r_k(c-k)b_0},\ \ \ j\in {\bf{E}};
\end{equation}
\par
 \emph{(ii)} if $B_c^\prime(1)>0$, then
\begin{align}
\label{Ec00-1beta}
\lim_{\beta\downarrow0}\Big\{E(C_{00})-\frac{1}{\beta}\Big\}&=\frac{1}{h-H(u)+\sum_{k=1}^{c-1}u^k(c-k)b_0(1-u)r_k},\\
\label{Ecj0-1beta}
\lim_{\beta\downarrow0}\Big\{E(C_{j0})-\frac{1}{\beta}\Big\}&=\frac{b_0m_1^*(j)}{h-H(u)+\sum_{k=1}^{c-1}u^k(c-k)
b_0(1-u)r_k},\ \ \ j\geq1,
\end{align}
where $(m_1^*(j);\ j\geq1)$ and $(r_k;\ 1\leq k\leq c-1)$ are given
by \eqref{eklinearequations} and \eqref{rrklinearequations},
respectively.
\end{theorem}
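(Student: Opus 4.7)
The plan is to use the closed form \eqref{ECj0},
$$E(C_{j0}) = \frac{1}{\beta} + \frac{\tilde{r}_{j0}(\beta)}{1-\beta\tilde{r}_{00}(\beta)},$$
and reduce everything to the $\beta\downarrow 0$ asymptotics of $\tilde{r}_{00}(\beta)$ and $\tilde{r}_{j0}(\beta)$, using Theorem~\ref{abcthth 3.1} together with the Abelian limits on $\Phi^*(\lambda)$ collected in Section~2. The first step is a simplification of \eqref{bbb 3.3}: evaluating the generating function \eqref{bbb 2.13} at $s=1$ (using $B_c(1)=B_k(1)=0$ and $U_\lambda(1)=-\lambda$) gives $\sum_{j\ge 0}\phi_{ij}^*(\lambda)=1/\lambda$, hence $\lambda\sum_{j\ge 1}\phi_{ij}^*(\lambda)=1-\lambda\phi_{i0}^*(\lambda)$, so that
$$\tilde{r}_{00}(\lambda) = \Bigl[\lambda + h - \sum_{i=1}^{\infty} h_i\,\lambda\phi_{i0}^*(\lambda)\Bigr]^{-1}.$$
Combined with $\tilde{r}_{j0}(\lambda)=\tilde{r}_{00}(\lambda)\,b_0\phi_{j1}^*(\lambda)$ from \eqref{bbb 3.4}, the problem reduces to the known Abelian limits $\lambda\phi_{i0}^*(\lambda)\to e_i^*$ from \eqref{bbb 2.19} and $\phi_{j1}^*(\lambda)\to m_1^*(j)$ from Lemma~\ref{abclele 2.5}, with the behaviour of $u(\lambda)$ described by Lemma~\ref{abclele 2.3}.

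For part (i), under $B_c'(1)<0$ and $\mu_1<\infty$ the chain is positive recurrent by Theorem~\ref{abcthth 3.2}(ii), and Theorem~\ref{abcthth 3.3} supplies $\beta\tilde{r}_{00}(\beta)\to\tilde{\pi}_0>0$. Multiplying \eqref{ECj0} by $\beta$ and using $\beta\tilde{r}_{j0}(\beta)=\beta\tilde{r}_{00}(\beta)\cdot b_0\phi_{j1}^*(\beta)$ yields $\beta\tilde{r}_{j0}(\beta)\to\tilde{\pi}_0\cdot b_0 m_1^*(j)=\tilde{\pi}_0$, since Theorem~\ref{the extinction probability}(i) together with the second equation of \eqref{eklinearequations} gives $b_0 m_1^*(j)=e_j^*=1$ whenever $B_c'(1)\le 0$ (the case $j=0$ being handled directly by $\tilde{r}_{00}$). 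Consequently $\beta E(C_{j0})\to 1+\tilde{\pi}_0/(1-\tilde{\pi}_0)=1/(1-\tilde{\pi}_0)$, and a routine substitution of the explicit form of $\tilde{\pi}_0$ from \eqref{bbb 3.11} produces the right-hand side of \eqref{xi0ECj0}.

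For part (ii), under $B_c'(1)>0$, Lemma~\ref{abclele 2.3}(iv) gives $u(\lambda)\uparrow u<1$, Lemma~\ref{abclele 2.5} gives the finiteness of each $m_k^*(i)$, and Theorem~\ref{the extinction probability}(ii) gives $e_i^*<1$. Dominated convergence, with the uniform bound $0\le\lambda\phi_{i0}^*(\lambda)\le 1$ and the summable majorant $\{h_i\}$, justifies passing $\sum_i h_i\,\lambda\phi_{i0}^*(\lambda)\to\sum_i h_i e_i^*$, and substituting the expression of $e_i^*$ from Theorem~\ref{the extinction probability}(ii) while recognizing $r_k=\sum_i h_i m_k^*(i)$ rewrites this limit as $H(u)-\sum_{k=1}^{c-1}u^{k-1}(c-k)b_0(1-u)r_k$. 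Hence $\tilde{r}_{00}(\beta)$ has a finite positive limit $R$, where $R^{-1}$ is the denominator on the right of \eqref{Ec00-1beta}, and correspondingly $\tilde{r}_{j0}(\beta)\to R\cdot b_0 m_1^*(j)$. Because $\beta\tilde{r}_{00}(\beta)\to 0$ in this regime, \eqref{ECj0} yields $E(C_{00})-1/\beta\to R$ and $E(C_{j0})-1/\beta\to R\cdot b_0 m_1^*(j)$ for $j\ge 1$, which are \eqref{Ec00-1beta} and \eqref{Ecj0-1beta}.

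The principal technical point is the asymptotic analysis of $\sum_i h_i\,\lambda\phi_{i0}^*(\lambda)$. In part (ii) the termwise limit plus dominated convergence is enough; in part (i) one needs the \emph{first-order} expansion of $(\lambda+h-\sum_i h_i\,\lambda\phi_{i0}^*(\lambda))/\lambda$, which requires combining $(1-u(\lambda)^i)/\lambda\to i/(-B_c'(1))$ from Lemma~\ref{abclele 2.3}(v) with the convergence $\phi_{ik}^*(\lambda)\to m_k^*(i)$, and is precisely the expansion already carried out inside the proof of Theorem~\ref{abcthth 3.3}; I would import that calculation to identify $\tilde{\pi}_0$ rather than redo it.
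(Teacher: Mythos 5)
Your proposal is correct and follows essentially the same route as the paper: part (i) via positive recurrence and the Abelian limit $\beta\tilde{r}_{00}(\beta),\beta\tilde{r}_{j0}(\beta)\to\tilde{\pi}_0$ giving $1/(1-\tilde{\pi}_0)$, and part (ii) via the explicit resolvent formulas of Remark~\ref{remark 3.1} together with transience ($\beta\tilde{r}_{00}(\beta)\to0$), with your simplification of \eqref{bbb 3.3} just reproducing \eqref{tildetilder00lambdar00lambda}. One remark: the exponent $u^{k-1}$ you obtain is what \eqref{tildetilder00lambdar00lambda} and \eqref{bbb 2.14} actually yield, whereas the theorem as printed shows $u^{k}$ in \eqref{Ec00-1beta}--\eqref{Ecj0-1beta}; this appears to be a typo in the statement rather than a flaw in your argument.
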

\begin{proof} (i) If $B_c^\prime(1)<0$ and $\mu_1<\infty$,
then from Theorem~\ref{abcthth 3.2} we know that $\{\tilde{X}_t;\
t\geq0\}$ is positive recurrent. Hence, using \eqref{ECj0} and
Tauberian theorem, we have
\begin{equation*}
\lim_{\beta\downarrow0}\beta
E(C_{j0})=1+\frac{\lim_{t\rightarrow+\infty}\tilde{p}_{j0}(t)}{1-\lim_{t\rightarrow+\infty}\tilde{p}_{00}(t)}
=\frac{1}{1-\tilde{\pi}_0}.
\end{equation*}
Then, \eqref{xi0ECj0} immediately follows from \eqref{bbb 3.11}.
\par
 (ii) If $B_c^\prime(1)>0$, then $\{\tilde{X}_t;\
t\geq0\}$ is transient. Hence, by \eqref{bbb 2.5},
\eqref{tildetilder00lambdar00lambda},
\eqref{tildetilderi0lambdari0lambda} and \eqref{ECj0}, we can obtain
\eqref{Ec00-1beta} and \eqref{Ecj0-1beta}. The proof is complete.
 \end{proof}
\begin{theorem}
For $E(C_{j0})$ given by \eqref{ECj0}, there holds
\begin{align*}
\lim_{\beta\rightarrow+\infty}E(C_{00})&=\frac{1}{h},\\
\lim_{\beta\rightarrow+\infty}\beta E(C_{j0})&=
\begin{cases}
1+\frac{b_0}{h},&\ \ \ {j=1,}\\
1,&\ \ \ {j\geq2}.
\end{cases}
\end{align*}
\end{theorem}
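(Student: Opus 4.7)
\bigskip

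\noindent\textbf{Proof proposal.} The plan is to substitute the closed form \eqref{ECj0} directly and extract the leading asymptotics as $\beta\to+\infty$. The key input is the large-$\lambda$ behaviour of $u(\lambda)$ from Lemma~\ref{abclele 2.3}(iii), namely $u(\lambda)\downarrow 0$ and $\lambda u(\lambda)\to cb_0$, together with the small-time expansion of the transition function $p_{ij}^*(t)=\delta_{ij}+q_{ij}^*t+O(t^2)$, which, after Laplace transform, gives the uniform estimate
\begin{equation*}
\phi_{ij}^*(\lambda)=\frac{\delta_{ij}}{\lambda}+\frac{q_{ij}^*}{\lambda^2}+O\!\left(\frac{1}{\lambda^3}\right)\qquad (\lambda\to\infty).
\end{equation*}
This reduction is legitimate because $Q^*$ is bounded.

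First I would compute the asymptotics of $\tilde r_{00}(\beta)$. Using \eqref{tildetilder00lambdar00lambda}, since $u(\beta)\to 0$ gives $H(u(\beta))\to 0$, and since every $\phi_{ik}^*(\beta)$ with $k\ge 1$ is $O(1/\beta^2)$ while $(1-u(\beta))\to 1$ and $u(\beta)^{k-1}\to 0$ for $k\ge 2$, the bracket in \eqref{tildetilder00lambdar00lambda} is $\beta+h+o(1)$. Consequently
\begin{equation*}
\tilde r_{00}(\beta)=\frac{1}{\beta+h+o(1)}=\frac{1}{\beta}-\frac{h}{\beta^2}+o\!\left(\frac{1}{\beta^2}\right),\qquad 1-\beta\tilde r_{00}(\beta)=\frac{h}{\beta}+o\!\left(\frac{1}{\beta}\right).
\end{equation*}
Plugging these into \eqref{ECj0} for $j=0$ gives $E(C_{00})=\tfrac{1}{\beta}+\tfrac{1/\beta+o(1/\beta)}{h/\beta+o(1/\beta)}\to \tfrac{1}{h}$.

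Next, for $j\ge 1$, use $\tilde r_{j0}(\beta)=\tilde r_{00}(\beta)\,b_0\phi_{j1}^*(\beta)$ from \eqref{bbb 3.4}. Reading off $q_{j1}^*$ from \eqref{bbb 1.2}: $q_{11}^*=b_1$ (the diagonal entry), $q_{21}^*=\min(2,c)b_0$, and $q_{j1}^*=0$ for $j\ge 3$ (no single-step jump from $j\ge 3$ down to $1$). Hence
\begin{equation*}
\phi_{11}^*(\beta)=\frac{1}{\beta}+O\!\left(\frac{1}{\beta^2}\right),\qquad \phi_{j1}^*(\beta)=O\!\left(\frac{1}{\beta^2}\right)\ \ (j\ge 2),
\end{equation*}
so that $\tilde r_{10}(\beta)=b_0/\beta^2+o(1/\beta^2)$ while $\tilde r_{j0}(\beta)=o(1/\beta^2)$ for $j\ge 2$. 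Substituting in \eqref{ECj0} and multiplying by $\beta$,
\begin{equation*}
\beta E(C_{10})=1+\frac{b_0/\beta+o(1/\beta)}{h/\beta+o(1/\beta)}\longrightarrow 1+\frac{b_0}{h},\qquad \beta E(C_{j0})=1+\frac{o(1/\beta)}{h/\beta+o(1/\beta)}\longrightarrow 1\ \ (j\ge 2),
\end{equation*}
which completes the proof.

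The main obstacle is the $0/0$ shape of the ratio $\tilde r_{j0}(\beta)/(1-\beta\tilde r_{00}(\beta))$: both numerator and denominator vanish at rate $1/\beta$ (for $j=0,1$) or faster (for $j\ge 2$), so one has to be careful to get the leading constants right. The only nontrivial verification needed is the $\phi_{ij}^*(\lambda)=\delta_{ij}/\lambda+q_{ij}^*/\lambda^{2}+O(\lambda^{-3})$ expansion, which follows from boundedness of $Q^*$ by a one-line integration-by-parts argument; once this is in hand every remaining step is mechanical bookkeeping.
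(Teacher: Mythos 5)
Your proposal is correct, but it reaches the limits by a genuinely different route than the paper. The paper first uses the resolvent (forward-equation) identity $(\beta+h)\tilde{r}_{j0}(\beta)=\delta_{j0}+b_0\tilde{r}_{j1}(\beta)$ to rewrite \eqref{ECj0} as
$E(C_{00})=\frac{1}{\beta}+\frac{\tilde{r}_{00}(\beta)}{h\tilde{r}_{00}(\beta)-b_0\tilde{r}_{01}(\beta)}$ and
$E(C_{j0})=\frac{1}{\beta}\bigl[1-\frac{h\tilde{r}_{j0}(\beta)-b_0\tilde{r}_{j1}(\beta)}{h\tilde{r}_{00}(\beta)-b_0\tilde{r}_{01}(\beta)}\bigr]$ for $j\geq1$, and then simply invokes the Tauberian (initial-value) theorem $\beta\tilde{r}_{jn}(\beta)\to\tilde{p}_{jn}(0)=\delta_{jn}$; the algebraic cancellation done before taking limits removes the $0/0$ indeterminacy, so no second-order information is needed. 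You instead work directly from the explicit formulas \eqref{tildetilder00lambdar00lambda} and \eqref{bbb 3.4} and expand the $Q^*$-resolvent in powers of $1/\beta$ using boundedness of $Q^*$; this needs the extra inputs $u(\beta)\to0$, $H(u(\beta))\to0$ and the off-diagonal bound $\phi_{j1}^*(\beta)=O(\beta^{-2})$, but it buys you more, namely the explicit error structure $1-\beta\tilde{r}_{00}(\beta)=h/\beta+o(1/\beta)$, $\tilde{r}_{10}(\beta)=b_0/\beta^2+o(1/\beta^2)$, from which the limits drop out. One intermediate claim is stated too strongly: it is not true that every $\phi_{ik}^*(\beta)$ with $k\geq1$ is $O(\beta^{-2})$, since the diagonal terms satisfy $\phi_{kk}^*(\beta)\sim1/\beta$; however, the trivial resolvent bound $\phi_{ik}^*(\beta)\leq1/\beta$ already makes the correction term in \eqref{tildetilder00lambdar00lambda} of order $O(1/\beta)=o(1)$, so your asymptotics $\tilde{r}_{00}(\beta)=1/\beta-h/\beta^{2}+o(\beta^{-2})$ and all subsequent steps stand. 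With that slip repaired, the argument is complete and equivalent in outcome to the paper's shorter proof.
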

\begin{proof} Since
\begin{equation*}
(\beta+h)\tilde{r}_{j0}(\beta)=\delta_{j0}+b_0\tilde{r}_{j1}(\beta),\
\ j\in {\bf{E}},
\end{equation*}
from \eqref{ECj0} we have
\begin{align*}
E(C_{00})&=\frac{1}{\beta}+\frac{\tilde{r}_{00}(\beta)}{h\tilde{r}_{00}(\beta)-b_0\tilde{r}_{01}(\beta)},\\
E(C_{j0})&=\frac{1}{\beta}\bigg[1-\frac{h\tilde{r}_{j0}(\beta)-b_0\tilde{r}_{j1}(\beta)}{h\tilde{r}_{00}
(\beta)-b_0\tilde{r}_{01} (\beta)}\bigg],\ \ \ (j=1,2,\ldots).
\end{align*}
By Tauberian theorem, we obtain
\begin{equation*}
\lim_{\beta\rightarrow+\infty}E(C_{00})=\frac{\lim_{t\rightarrow0}\tilde{p}_{00}(t)}
{h\lim_{t\rightarrow0}\tilde{p}_{00}(t)-b_0\lim_{t\rightarrow0}\tilde{p}_{01}(t)}=\frac{1}{h}
\end{equation*}
and
\begin{align*}
\lim_{\beta\rightarrow+\infty}\beta
E(C_{j0})&=1-\frac{h\lim_{t\rightarrow0}\tilde{p}_{j0}(t)-b_0\lim_{t\rightarrow0}\tilde{p}_{j1}(t)}
{h\lim_{t\rightarrow0}\tilde{p}_{00}(t)-b_0\lim_{t\rightarrow0}\tilde{p}_{01}(t)}\\&=
\begin{cases}
1+\frac{b_0}{h},&\ \ \ {j=1,}\\
1,&\ \ \ {j\geq2}.
\end{cases}
\end{align*}
The proof is complete.
 \end{proof}
\section*{Acknowledgements}
This work was substantially supported by the National Natural
Sciences Foundations of China (No. 11371374, No. 11571372), Research
Fund for the Doctoral Program of Higher Education of China (No.
20110162110060).



\begin{thebibliography}{10}
\bibitem{An1991}
Anderson W.
\newblock  Continuous-Time Markov Chains: An Applications-Oriented Approach. New York: Springer-Verlag, 1991

\bibitem{AR2000}
 Artalejo J.
\newblock G-networks: a versatile approach for work removal in queueing networks.
\newblock Eur J Oper Res, 2000, 126: 233-249

\bibitem{As2003}
Asmussen S.
\newblock  Applied Probability and Queues, 2nd edn. New York: John Wiley, 2003

\bibitem{Bayer1996}
Bayer N and Boxma O J.
\newblock Wiener-Hopf analysis of an $M/G/1$ queue with negative customers and of a related class of random walks.
\newblock Queueing Systems, 1996, 23: 301-316

\bibitem{CPLZ2010a}
Chen A Y, Pollett P, Li J P and Zhang H J.
\newblock  Markovian bulk-arrival and bulk-service queues with state-dependent control.
\newblock  Queueing Systems, 2010, 64: 267-304

\bibitem{Ch1990}
Chen A Y, Renshaw E.
\newblock Markov branching processes with instantaneous immigration.
\newblock Prob Theory Relat Fields, 1990, 87: 209--240

\bibitem{Ch1997}
Chen A Y, Renshaw E.
\newblock The M/M/1 queue with mass exodus and mass arrives
when empty.
\newblock J Appl Prob, 1997, 34: 192-207

\bibitem{Ch2004}
Chen A Y, Renshaw E.
\newblock Markov bulk-arriving queues with state-dependent
control at idle time.
\newblock Adv Appl Prob, 2004, 36: 499-524

\bibitem{CMF92}
Chen M F.
\newblock  From Markov Chains to Non-Equilibrium Particle Systems. Singapore: World Scientific,  1992

\bibitem{CMF04}
 Chen M F.
\newblock  Eigenvalues, Inequalities, and Ergodic Theory. London:  Springer-Verlag,  2004

 \bibitem{DGNR2008}
Di Crescenzo A, Giorno V, Nobile A G and Ricciardi L M.
\newblock A note on birth-death processes with catastrophes.
\newblock Statistics and Probability Letters, 2008, 78: 2248-2257

\bibitem{DK2001}
Dudin A and Karolik A.
\newblock BMAP/SM/1 queue with Markovian input of disasters and non-instantaneous
recovery.
\newblock Performance Evaluation, 2001, 45: 19-32

\bibitem{Ge1991}
Gelenbe E.
\newblock Product-form queueing networks with negative and positive customers.
\newblock J Appl Prob, 1991, 28: 656-663

\bibitem{Geetal1991}
Gelenbe E, Glynn P and Sigman K.
\newblock Queues with negative arrivals.
\newblock J Appl Prob, 1991, 28: 245-250

\bibitem{Gr1985}
Gross  D,  Harris C M.
\newblock Fundamentals of Queueing Theory. New York: John Wiley, 1985

\bibitem{Jain1996}
Jain G and Sigman K.
\newblock A Pollaczek-Khintchine formula for M/G/1 queues with disasters.
\newblock  J Appl Prob, 1996, 33: 1191-1200

\bibitem{Li2006}
Li J P, Chen A Y.
\newblock Markov branching processes with immigration and resurrection.
\newblock Markov Processes Relat Fields, 2006, 12: 139--168

\bibitem{Pakes1997}
Pakes A G.
\newblock Killing and resurrection of Markov processes. Commun Statist--Stochastic Models, 1997, 13: 255--269

\bibitem{Pa1991}
Parthasarathy P R and Krishna Kumar B.
\newblock Density-dependent birth and death processes with state-dependent immigration.
\newblock Mathematical and Computer Modelling, 1991, 15: 11-16

\bibitem{ZK2012}
Zeifman A and Korotysheva A.
\newblock Perturbation Bounds for $\mathrm{M}_t/\mathrm{M}_t/\mathrm{N}$ Queue with Catastrophes.
\newblock Stochastic Models, 2012, 28: 49-62

\end{thebibliography}
\end{document}